%

\documentclass[aap,MSNbibl,dvips]{arximspdf}
\usepackage{mathrsfs}
\usepackage{graphicx}
\usepackage{url, breakurl}

%

\doi{10.1214/13-AAP978} 
\volume{24}
\issue{6}
\pubyear{2014}
\firstpage{2297}
\lastpage{2339}

\makeatletter
\newcommand{\widebar}{\overline}
\newcommand{\ddd}{d}
\newcommand{\rright}{\right}
\newcommand{\lleft}{\left}
\newcommand{\rrvert}{\vert}
\newcommand{\llvert}{\vert}
\newtheorem{teo}{Theorem}[section]
\newtheorem{lem}[teo]{Lemma}
\newtheorem{prop}[teo]{Proposition}
\newtheorem{cor}[teo]{Corollary}
\newproclaim{rem}{Remarks}
\newproclaim{re}{Remark}
\newproclaim{pop}{Plan of the paper}
\newproclaim{db}{Description of the backbone}
\newproclaim{aaa}{The pruned subtrees, and their grafting on the backbone}
\newcommand{\V}[1]{{\operatorname{Var}}[#1]}
\newcommand{\llbracket}{[\![}
\newcommand{\rrbracket}{]\!]}
\makeatother
\begin{document}
\begin{frontmatter}

\title{Cutting down trees with a Markov chainsaw}
\runtitle{Cutting down trees with a Markov chainsaw}

\begin{aug}
\author[A]{\fnms{Louigi} \snm{Addario-Berry}\ead[label=e1]{louigi@gmail.com}},
\author[B]{\fnms{Nicolas} \snm{Broutin}\corref{}\ead[label=e2]{nicolas.broutin@inria.fr}}
\and\break 
\author[C]{\fnms{Cecilia} \snm{Holmgren}\ead[label=e3]{cecilia@math.su.se}}
\runauthor{L. Addario-Berry, N. Broutin and C. Holmgren}
\affiliation{McGill University,
Inria Paris-Rocquencourt and
Stockholm University}
\address[A]{L. Addario-Berry\\
Department of Mathematics and Statistics\\
McGill University\\
805 Sherbrooke Street West\\
Montreal, Canada, H3A 0B9\\
\printead{e1}} 
\address[B]{N. Broutin\\
Inria Paris--Rocquencourt\\
Domaine de Voluceau\\
78153 Le Chesnay\\
France\\
\printead{e2}}
\address[C]{C. Holmgren\\
Department of Mathematics\\
Stockholm University\\
114 18 Stockholm\\
Sweden\\
\printead{e3}}
\end{aug}

\received{\smonth{5} \syear{2013}}
\revised{\smonth{10} \syear{2013}}

%
\begin{abstract}
We provide simplified proofs for the asymptotic distribution of the
number of cuts required to cut down a Galton--Watson tree with
critical, finite-variance offspring distribution, conditioned to have
total progeny $n$. Our proof is based on a coupling which yields a
precise, nonasymptotic distributional result for the case of uniformly
random rooted labeled trees (or, equivalently, Poisson Galton--Watson
trees conditioned on their size). Our approach also provides a new,
random reversible transformation between Brownian excursion and
Brownian bridge.
\end{abstract}

%
\begin{keyword}[class=AMS]
\kwd[Primary ]{60C05}
\kwd{60F17}
\kwd{05C05}
\kwd[; secondary ]{11Y16}
\end{keyword}
\begin{keyword}
\kwd{Cutting down}
\kwd{Galton--Watson tree}
\kwd{real tree}
\kwd{continuum random tree}
\kwd{Gromov--Hausdorff convergence}
\end{keyword}

\end{frontmatter}

\section{Introduction}\label{secintro}

The subject of cutting down trees was introduced by Meir and Moon \cite
{MeMo1970,MeMo1974}. One is given a rooted tree $T$ which is pruned by
random removal of edges. At each step, only the portion containing the
root is retained (we refer to the portions not containing the root as
the \emph{pruned} portions), and the process continues until eventually
the root has been isolated. The main parameter of interest is the
random number of cuts necessary to isolate the root. The dual problem
of isolating a leaf or a node with a specific label has been considered
by Kuba and Panholzer \cite{KuPa2008a,KuPa2008}.

The procedure has been studied on different deterministic and random
trees. Essentially two kinds of random models have been considered for
the tree: \emph{recursive trees} with typical inter-node distances of
order $\log n$ \cite{MeMo1978,IkMo2007,DrIkMoRo2009,Holmgren2008} and
trees arising from critical, finite-variance branching processes
conditioned to have size $n$, with typical distances of order $\sqrt n$
\cite{Janson2006c,FiKaPa2006,Panholzer2003,Janson2004,Panholzer2006}.
In this paper, we are interested in the latter family, and will refer
to such trees as \textit{conditioned trees} for short.

For conditioned trees emerging from a progeny distribution with
variance $\sigma^2\in(0,\infty)$, once divided by $\sigma\sqrt n$, the
number of cuts required to isolate the root of a conditioned tree
of\vadjust{\goodbreak}
size $n$ converges in distribution to a Rayleigh random variable with
density $xe^{-x^2/2}$ on $[0,\infty)$. In this form, under only a
second moment assumption, this was proved by Janson \cite{Janson2006c};
below we discuss earlier, partial results in this direction. The fact
that the Rayleigh distribution appears here with a $\sqrt n$ scaling in
a setting involving conditioned trees struck us as deserving of
explanation. The Rayleigh distribution also arises as the limiting
distribution of the length of a path between two uniformly random nodes
in a conditioned tree, after appropriate rescaling.

In this paper we show that the existence of a Rayleigh limit in both
cases is not fortuitous. We will prove using a coupling method that the
number of cuts and the distance between two random vertices are
asymptotically equal in distribution (modulo a constant factor $\sigma
^2$). This approach yields as a by-product very simple proofs of the
results concerning the distribution of the number of cuts obtained in
\cite{Panholzer2003,FiKaPa2006,Janson2004,Janson2006c}; this is
explained in Section~\ref{secsigma}.

At the heart of our approach is a coupling which yields the \textit{exact}
distribution of the number of cuts for every fixed $n$, for the special
case of uniform Cayley trees (uniformly random labeled rooted trees).
Given a rooted tree $t$ and a sequence $S=(v_1,\ldots,v_k)$ of not
necessarily distinct nodes of $t$, consider an edge-removal procedure
defined as follows.
The \textit{planting of $t$ at $S$}, denoted $t\langle S \rangle$, is obtained
from $t$ by creating a new node $w_i$ for each $1 \le i \le k$, whose
only neighbor is $v_i$.
(If the $v_i$'s are not all distinct, then the procedure results in
multiple new vertices being connected to the same original vertex; if
$v_i=v_j$ for $i\ne j$, then $w_i\ne w_j$ are both connected to $v_i=v_j$.)
Let $W=\{w_1,\ldots, w_k\}$ be the set of new vertices (it may be more
natural to take $W$ as a sequence, since $S$ is a sequence, but taking
$W$ as a set turns out to be notationally more convenient later).
For a subgraph $t'$ of $t\langle S \rangle$ and a vertex $v$, we write
$C(v,t')$ for the connected component of $t'$ containing $v$; let also
$C(V,t')$ be the (minimal) set of connected components containing all
the vertices in a set $V$.

Let $F^{(0)}=t\langle S \rangle$, and for $j \ge0$, let $F^{(j+1)}$ be
obtained from $F^{(j)}$ by removing a uniformly random edge from among
all edges of $C(W, F^{(j)})$, if there are any such edges. The
procedure stops at the first time $j$ at which $C(W,F^{(j)})$ simply
consists of the set of new vertices $\{w_1,\ldots, w_k\}$. We call this
procedure \textit{planted cutting of $S$ in $t$}. We remark that Janson
\cite{Janson2004} already introduced the planted cutting procedure in
the case $k=1$. Note that if $t$ is a rooted tree with root $r$, then
$t\langle\{r\} \rangle$ contains only one node which is not a node of $t$,
and in this case the cutting procedure is almost identical to that
described in the first paragraph of the \hyperref[secintro]{Introduction}; see, however, the
remark just before Theorem~\ref{teokey}. Write $M=M(t,S)$ for the
(random) total number of edges removed in the above procedure. We
remark that for each $0 \le i \le M$, $F^{(i)}$ has $i+1$ connected
components, each of which is a tree.

%
\begin{teo}\label{teokcoup}
Fix $n \ge1$ and $k \ge1$, let $T_n$ be a uniform Cayley tree on
nodes $[n]=\{1,\ldots,n\}$, let $V_1,\ldots,V_k$
be independent, uniformly random nodes of $T_n$ and write $S_k =
(V_1,\ldots,V_k)$. Then
$M(T_n,S_k) - k$ is distributed as the number of edges spanned by the
root plus $k$ independent, uniformly random nodes in a uniform Cayley
tree of size $n$.
\end{teo}

For $k\ge1$, let $\chi_k$ be a chi random variable with $2k$ degrees
of freedom; the distribution of $\chi_k$ is given by
\[
{\mathbf P}(\chi_k \le x)=\int_0^x
\frac{2^{1-k}s^{2k-1}e^{-s^2/2}}{(k-1)!}\,\ddd s.
\]

%
\begin{cor}\label{corbert}
For any fixed $k$, as $n \to\infty$, $M(T_n,S_k)/\sqrt{n}$ converges
to~$\chi_k$ in distribution.
\end{cor}
The fact that, after rescaling, the number of edges spanned by the root
and $k$ random vertices in $T_n$ converges to $\chi_k$ in distribution
is well known; see, for example, Aldous \cite{Aldous1993a}, Lemma~21.
In Appendix~\ref{appendixcor} we sketch one possible proof of
Corollary~\ref{corbert} and briefly discuss stronger forms of convergence.

\begin{rem*}
\begin{longlist}[$\star$]
\item[$\star$] In the special case $k=1$, Theorem~\ref{teokcoup} states that
the number of edges required to isolate the planted node in a planted
uniform Cayley tree of size $n$ is identical in distribution to the
number of vertices on the path between two uniformly random nodes in a
uniform Cayley tree of size $n$. For the case $k=1$,
Chassaing and Marchand \cite{ChMa2009} have also announced a simple
bijective proof of this result, based on linear probing hashing.

\item[$\star$] After the current results were announced \cite{webref2}, and
independently of
our results, Bertoin \cite{Bertoin2012a} used powerful recent results of
Haas and Miermont \cite{HaMi2012a} to establish the distributional
convergence in Corollary~\ref{corbert}. Bertoin's results give a
different explicit interpretation of the number of cuts as the
asymptotic distance between two nodes. Bertoin and Miermont \cite
{BeMi2012a} also study the genealogy of the
fragmentation resulting from the removal of edges in a random order.

\item[$\star$] The original analyses by Meir and Moon \cite{MeMo1970} include
asymptotics for the mean and variance of the number of cuts. In recent
years, the subject of distributional asymptotics has been revisited by
several researchers. Panholzer \cite{Panholzer2003} and Fill, Kapur and
Panholzer \cite{FiKaPa2006} have studied the somewhat simpler case
where, the laws of the trees (as $n$ varies), satisfy a certain
consistency relation. More precisely, if $\mu_n$ is the law of the
$n$-vertex tree, the consistency condition requires that after one step
of the cutting procedure, conditional on the size $k$ of the pruned
fragment, the pruned fragment and the remaining tree are independent,
with respective laws $\mu_k$ and $\mu_{n-k}$. The class of random trees
which satisfy this property includes uniform Cayley trees. For this
class, they obtained the limiting distribution of various functionals
of the number of cuts using the method of moments, and gave an analytic
treatment of the recursive equation describing the cutting procedure.
Janson \cite{Janson2006c,Janson2004} used a representation of the
number of cuts in terms of generalized records in a labeled tree to
extend some of these results to all the family trees of critical
branching processes with offspring distribution having a finite
variance. His method is also based on the calculation of moments.
\end{longlist}
\end{rem*}

In the case $k=1$, our coupling approach also allows us to describe the
joint distribution of the sequence of pruned trees. In this paper, a
\textit{forest} is a sequence of rooted labeled trees $\mathbf
{f}=(t_1,\ldots,t_j)$ with pairwise disjoint sets of labels. In the
notation of
Theorem~\ref{teokcoup} and of the paragraph which precedes it, write
$M = M(T_n,S_1)$ and write $(T^{(1)},\ldots,T^{(M)})$ for the connected
components of $F^{(M)}$, listed in the order they are created during
the edge-removal\vspace*{1pt} procedure on $T_n\langle S_1 \rangle$. Note that the edge-removal procedure stops at the first time that $w_1$ is isolated, so
necessarily $T^{(M)}$ consists simply of the single vertex $w_1$. For
each $1 \le i \le M$, $T^{(i)}$ is a tree, which we view as rooted at
whichever node of $T^{(i)}$ was closest to $w_1$ in $T_n\langle S_1
\rangle$;
in particular, necessarily $T^{(M-1)}$ is rooted at~$V_1$.
%
\begin{teo}\label{teouniforest}
The forest $(T^{(1)},\ldots,T^{(M-1)})$ is distributed as a uniformly
random forest on $[n]$.
\end{teo}
The analysis which leads to Theorem~\ref{teouniforest} will also yield
as a by-product the following result.
%
\begin{teo}\label{teoreverse}
Let $F^n=(T_1,\ldots,T_{\kappa})$ be a uniformly random forest on
$[n]$. For each $i \in[\kappa-1]$, add an edge from the root of $T_i$
to a uniformly random node from among all nodes in $T_{i+1},\ldots,T_{\kappa}$. Call the resulting tree $T$, and view $T$ as rooted at
the root of $T_{\kappa}$. Then $T$ is distributed as a uniform Cayley
tree on $[n]$.
\end{teo}

It turns out that our coupling approach allows us to prove results
about a natural ``continuum version'' of the random cutting procedure
which takes place on the Brownian continuum random tree (CRT). Our main
result about randomly cutting the CRT is Theorem~\ref{teoexcbr},
below. Although we work principally in the language of $\mathbb{R}$-trees,
Theorem~\ref{teoexcbr} can be viewed as a new, invertible random
transformation between Brownian excursion and reflecting Brownian
bridge. Though the precise statement requires a fair amount of set-up,
if this set-up is taken for granted the result can be easily described.
(For the reader for whom the following three paragraphs are opaque, all
the below terminology will be re-introduced and formally defined later
in the paper.)\vadjust{\goodbreak}

Let $(\mathcal{T},d)$ be a CRT with root $\rho$ and mass measure $\mu$,
write $\operatorname{skel}(\mathcal{T})$ for its skeleton, and let
$\mathcal{P}$
be a
homogeneous Poisson point process on $\operatorname{skel}(\mathcal
{T}) \times
[0,\infty)$
with intensity measure $\ell\otimes\ddd t$, where $\ell$ is
the length
measure on the skeleton. We think of the second coordinate as a time
parameter. View each point $(p,\tau)$ of $\mathcal{P}$ as a potential
cut, but only make a cut at $p$ if no previous cut has fallen on the
path from the root $\rho$ to $p$. At each time $0 \le t < \infty$, this
yields a forest of countably many rooted $\mathbb{R}$-trees; we write
$\mathcal
{T}_t$ for the component of this forest containing $\rho$. Run to time
\textit{infinity}, this process again yields a countable collection of
rooted $\mathbb{R}$-trees, later called $(f_i, i \in I_{\infty})$.
Furthermore,
each element $f_i$ of the collection comes equipped with a time index
$\tau_i$ (the time at which it was cut).

For $0 \le t < \infty$, let $L(t)=\int_0^t \mu(\mathcal{T}_s)\,
\ddd s$,
and let $L(\infty)=\lim_{t \to\infty} L(t)$. It turns out that
$L(\infty)$ is almost surely finite.
Next, create a single compact $\mathbb{R}$-tree $(\mathcal{T}',d')$
from the
collection $(f_i,i \in I_{\infty})$ and the closed interval
$[0,L(\infty
)]$ by identifying the root of $f_i$ with the point $L(\tau_i) \in
[0,L(\infty)]$, for each $i \in I_{\infty}$, then taking the completion
of the resulting object.
Let $\mu'$ be the push-forward of $\mu$ under the transformation
described above.
%
\begin{teo}\label{teoforwardtrans}
The triples $(\mathcal{T}',d',\mu')$ and $(\mathcal{T},d,\mu)$ have the
same distribution.
Furthermore, $0 \in\mathcal{T}'$ and $L(\infty) \in\mathcal{T}'$ are
independent and
both have law~$\mu'$.
\end{teo}
Using the standard encoding of the CRT by a Brownian excursion,
we may take the triple $(\mathcal{T},d,\mu)$, together with the point
$\rho$, to be encoded by a Brownian excursion. Similarly,
it is possible to view the triple $(\mathcal{T}',d',\mu')$, together
with the points $0$ and $L(\infty)$,
as encoded by a
reflecting Brownian bridge; see Section~10 of~\cite{AlPi1994} (this is
also closely related to the ``forest floor'' picture of \cite
{bertoinpitman94path}).
From this perspective, the transformation from $(\mathcal{T},\rho)$ to
$(\mathcal{T}',0,L_{\infty})$ becomes a new, random transformation from
Brownian excursion to reflecting Brownian bridge.
When expressed in the language of Brownian excursions and bridges, this
theorem and our ``inverse transformation'' result, Theorem~\ref
{teoctsreverse}, below, have intriguing similarities to results from
Aldous and Pitman \cite{AlPi1994}; we briefly discuss this in
Appendix~\ref{secapp}.

As an immediate consequence of the above development, we will obtain
the following result.
Let $\nu(t)$ be the mass of the tagged fragment in the Aldous--Pitman
\cite{AlPi1994} fragmentation at time $t$. Then, $(\nu(t),t\ge0)$ is
distributed as $(\mu(\mathcal{T}_t), t\ge0)$ and we have the following.
%
\begin{cor}\label{corrayleigh}
The random variable $\int_0^{\infty} \nu(t)\,\ddd t$ has the 
standard\break
Rayleigh distribution.
\end{cor}
%
A different proof of this fact appears in a recent preprint by Abraham
and Delmas \cite{AbDe2013b}.
We also note that the identity in Theorem~\ref{teoforwardtrans} has
been generalized to the case of L\'evy trees in \cite{AbDe2013a}.

We are also able to explicitly describe the inverse of the
transformation of Theorem~\ref{teoforwardtrans},
and we now do so.
Let $(\mathcal{T},d,\mu)$ be a measured CRT, and let $\rho,\rho'$ be
independent random points in $\mathcal{T}$ with law $\mu$. Let $B$ be
the set of branch points of $\mathcal{T}$ on the path from $\rho$ to
$\rho'$.
For each $b \in B$ let
$\mathcal{T}_b$ be the set of points $x \in\mathcal{T}$ for which the
path from $x$ to $\rho$ contains a point $b' \in B$ with
$d(\rho,b') > d(\rho,b)$. In words, $\mathcal{T}_b$ is the set of
points in subtrees that ``branch off the path from $\rho$ to $\rho'$
after $b$.''
Then, independently for each point $b \in B$, let $y_b$ be a random
element of~$\mathcal{T}_b$, with law $\mu/\mu(\mathcal{T}_b)$.
Delete all nonbranch points on the path between $\rho$ and $\rho'$;
then, for each $b \in B$, identify the points $b$ and $y_b$.
Write $(\mathcal{T}',d')$ for the resulting tree, and $\mu'$ for the
push-forward of $\mu$ to~$\mathcal{T}'$.
%
\begin{teo}\label{teoctsreverse}
The triples $(\mathcal{T},d,\mu)$ and $(\mathcal{T}',d',\mu')$ have the
same distribution.
Furthermore, the point $\rho' \in\mathcal{T}'$ has law $\mu'$.
\end{teo}
We remark that it is not \textit{a priori} obvious the inverse
transformation should a.s. yield a connected metric space, let alone
what the distribution of the resulting space should be.
Theorems~\ref{teoforwardtrans} and~\ref{teoctsreverse} together
appear as Theorem~\ref{teoexcbr}, below.

\begin{pop*}
In Section~\ref{secnotation} we
gather definitions and state our notational conventions. In
Section~\ref{seccayley} we prove all finite distributional identities
related to
the case $k=1$, in particular proving Theorems~\ref{teouniforest}
and~\ref{teoreverse}, and in Section~\ref{secmorethanone} we\vadjust{\goodbreak} prove
Theorem~\ref{teokcoup}. Our results on cutting the CRT, notably
Theorem~\ref{teoexcbr}, appear in Section~\ref{secnovel}; finally, in
Section~\ref{secsigma} we explain how our results straightforwardly
imply the distributional convergence results obtained in \cite
{Panholzer2003,Janson2004,Janson2006c}.
\end{pop*}

\section{Notation and definitions}\label{secnotation}
We note that the terminology introduced in Sections~\ref{secrealtrees}
and~\ref{sectypos} is not used until Section~\ref{secnovel}, and the
reader may wish to correspondingly postpone their reading of these sections.

\subsection{Finite trees and graphs}
Given any finite graph $G$, we write $v(G)$ for the set of vertices (or
\textit{nodes}) of $G$ and $e(G)$ for the set of edges of $G$, and write
$|G|$ for the size (number of vertices) of $G$.
If we say that \textit{$G$ is a graph on $S$}, we mean that $v(G)=S$.
Given a graph $G$ and $w \in v(G)$, we write $C(w,G)$ for the connected
component of $G$ containing
$w$. Given a graph $G$ and $S' \subset e(G)$, we sometimes write
$G\setminus S'$ for the graph
$(v(G),e(G)\setminus S')$.

Practically all graphs in this paper will be rooted trees and be
denoted $t$ or $T$. When we write ``tree'' we mean a rooted tree unless
we explicitly say otherwise.

Given a rooted labeled tree $t$, we write $r(t)$ for the root of $t$.
For a vertex $u$ of $t$ write $t(u)$ for the subtree of $t$ rooted at
$u$, write $h_t(u)$ for the number of edges on the path from $r(t)$ to
$u$, and write $a(u)=a(u,t)$ for the parent of $u$ in $t$, with the
convention that $a(r(t))=r(t)$.
At times we view the edges of $t$ as oriented toward $r(t)$. In other
words, if we state that $(u,v)$ is an oriented edge of $t$, or write
$(u,v) \in e(t)$, we mean that $\{u,v\} \in e(t)$ and $v=a(u)$. In this
case we call $u$ the \textit{tail} of $\{u,v\}$ and $v$ the \emph
{head} of
$\{u,v\}$.
It is also sometimes useful to view $r(t)$ as both the head and tail of
a directed loop $(r(t),r(t))$; we will mention this again when it arises.

Given a set $S=\{v_1,\ldots,v_k\}$ of nodes of $t$, we write
$t{\llbracket S \rrbracket}$
or $t{\llbracket v_1,\ldots,v_k \rrbracket}$ for the subtree of $t$
obtained by taking
the union of all shortest paths between elements of $S$, and call $t
{\llbracket S \rrbracket}$ \emph{the subtree of $t$ spanned by $S$};
if $r(t) \in S$ then
we consider $t{\llbracket S \rrbracket}$ as rooted at $r(t)$. Given a
single node $v \in
t$, we write $t^{r \leftrightarrow v}$ to denote the tree obtained from
$t$ by
rerooting at $v$. As mentioned in the \hyperref[secintro]{Introduction}, in this paper an
\emph{ordered forest} is a sequence of rooted labeled trees $\mathbf{f}
=(t_1,\ldots,t_k)$ with pairwise disjoint sets of labels. If we write
\textit{$\mathbf{f}=(t_1,\ldots,t_k)$ is an ordered forest on~$S$}
we mean that
$v(t_1) \cup\cdots\cup v(t_k) = S$.

Given a finite set $S$, by a \emph{uniform Cayley tree on $S$} we mean a
rooted tree chosen uniformly at random from among all rooted trees $t$
on $S$; there are $|S|^{|S|-1}$ such trees.
Given a rooted or unrooted tree $t$, and an ordered sequence
$S=(v_1,\ldots,v_k)$ of elements of $v(t)$,
we recall the definition of $t \langle S \rangle$ (the \emph{planting
of $t$ at
$S$}) from the \hyperref[secintro]{Introduction}: for each $1 \le i \le k$, create a new
node $w_i$ and add a single edge between $w_i$ and $v_i$.
Given a set $U \subset v(t\langle S \rangle)$, we write $|U|$ for the number
of nodes of $U \setminus\{w_1,\ldots,w_k\}$. In other words, the nodes
$w_1,\ldots,w_k$ \emph{are not included} when performing node counts in
$t\langle S \rangle$.

\subsection{Metric spaces and real trees}\label{secrealtrees}
In this paper all metric spaces are assumed to be separable.
Given a metric space $\mathrm{X}=(X,d)$, and a real number $c>0$, we\vadjust{\goodbreak} write
$c\mathrm{X}$ for the metric space obtained by scaling all distances
by $c$.
In other words, if $x,y \in X$, then
the distance between $x$ and $y$ in $c\mathrm{X}$ is $cd(x,y)$. We
also write
$\operatorname{diam}(\mathrm{X}) = \sup\{d(x,y)\dvtx x,y \in X\} \in
[0,\infty]$.

Given a metric space $(X,d)$ and $x,y \in X$, a \emph{geodesic between
$x$ and $y$} is
an isometry $f\dvtx[0,d(x,y)] \to X$ such that $f(0)=x$ and $f(d(x,y))=y$.
In this case we call the image
$\operatorname{Im}(f)$ a \emph{shortest path between $x$ and $y$}.

A metric space $\mathrm{T}=(T,d)$ is an {$\mathbb{R}$-tree} if for
all $x,y \in T$ the
following two properties hold:
\begin{longlist}[(1)]
\item[(1)] There exists a unique geodesic between $x$ and $y$. In other
words, there exists a unique
isometry $f\dvtx[0,d(x,y)] \to T$ such that $f(0)=x$ and $f(d(x,y))=y$.
\item[(2)] If $g\dvtx[0,d(x,y)] \to T$ is a continuous injective map with
$g(0)=x$ and $g(d(x,y))=y$, then $f([0,d(x,y)])=g([0,d(x,y)])$.
\end{longlist}
Given an $\mathbb{R}$-tree $(T,d)$ and $a,b \in T$, we write
${\llbracket a,b
\rrbracket}$ for
the image of the unique geodesic from $a$ to $b$, and write $\rrbracket
a,b\llbracket= {\llbracket a,b \rrbracket}\setminus\{a,b\}$.
The \emph{skeleton} $\operatorname{skel}(\mathrm{T})$ is defined as
\[
\bigcup_{a,b \in T}\rrbracket a,b\llbracket.
\]
(We could equivalently define $\operatorname{skel}(\mathrm{T})$ as
the set
of points whose
removal disconnects the space.)
Since $(T,d)$ is separable by assumption, this may be re-written as a
countable union, and so there is a unique $\sigma$-finite measure
$\ell
$ on $T$ with $\ell(]a,b[) = d(a,b)$ for all $a,b \in T$ and such that
\mbox{$\ell(T\setminus\operatorname{skel}(T))=0$}. We refer to $\ell$ as
the \emph{length
measure} on $\mathrm{T}$.

For a set $S \subset T$, write $T{\llbracket S \rrbracket}$ for the
subspace of $T$
spanned by $\bigcup_{x,y \in S} \rrbracket x,y\llbracket$ and $d_S$ for
its distance (the restriction of $d$ to $T{\llbracket S \rrbracket}$),
and note that
$(T{\llbracket S \rrbracket},d_S)$ is again a real tree.

\subsection{Types of convergence} \label{sectypos}
Before proceeding to definitions, we remark that not all the
terminology of this subsection is yet fully standardized. The
Gromov--Hausdorff distance is by now well-established.
The name ``Gromov--Hausdorff--Prokhorov distance'' seems to have first
appeared in \cite{villani2009optimal}, Chapter~27, where it had a
slightly different meaning. The probabilistic aspects of the
Gromov--Hausdorff--Prokhorov distance were substantially developed in
\cite{miermont2009tessellations,HaMi2012a}. In particular, it is shown
in \cite{miermont2009tessellations}, Section~6.1, that the below
definition of $d_{\mathrm{GHP}}$ is equivalent to a definition based
on the more
standard Prokhorov distance between measures.

\subsubsection*{Gromov--Hausdorff distance}
Let $\mathrm{X}=(X,d_X)$ and $\mathrm{Y}=(Y,d_Y)$ be compact metric
spaces. The
\emph{Gromov--Hausdorff distance} $d_{\mathrm{GH}}(\mathrm
{X},\mathrm{Y})$
between $\mathrm{X}$ and $\mathrm{Y}$ is
defined as follows. Let $\mathcal{S}$ be the set of all pairs $(\phi,\psi)$,
where $\phi\dvtx X \to Z$ and $\psi\dvtx Y \to Z$ are isometric
embeddings into
some common metric space $(Z,d_Z)$. Then
\[
d_{\mathrm{GH}}(\mathrm{X},\mathrm{Y}) = \inf_{(\phi,\psi) \in
\mathcal{S}}
d_{\mathrm{H}} \bigl(\phi(X), \psi(Y) \bigr),
\]
where $d_{\mathrm{H}}$ denotes Hausdorff distance in the target metric space.
It can be verified that $d_{\mathrm{GH}}$ is indeed a distance and
that, writing
$\mathcal{M}$ for the set of isometry-equivalence classes of compact metric
spaces, $(\mathcal{M},d_{\mathrm{GH}})$ is a complete separable
metric space. We say that
a sequence $\mathrm{X}_n=(X_n,d_n)$ of compact metric spaces converges
to a
compact metric space $\mathrm{X}=(X,d)$ if $d_{\mathrm{GH}}(\mathrm
{X}_n,\mathrm{X}) \to
0$ as $n \to
\infty$. It is then obvious that $X$ is uniquely determined up to
isometry. There are two alternate descriptions of the Gromov--Hausdorff
distance that will be useful and which we now describe.

Next, for compact metric spaces $(X,d_X)$ and $(Y,d_Y)$, and a subset
$C$ of $X\times Y$,
the \emph{distortion} $\operatorname{dis}(C)$ is defined by
\[
\operatorname{dis}(C) = \sup \bigl\{\bigl|d_X \bigl(x,x'
\bigr)-d_Y \bigl(y,y' \bigr)\bigr|\dvtx(x,y) \in C,
\bigl(x',y' \bigr) \in C \bigr\}.
\]
A \emph{correspondence} $C$ between $X$ and $Y$ is a Borel subset of $X
\times Y$ such that for every $x \in X$, there exists $y \in Y$ with
$(x,y) \in C$ and vice versa. Write $\mathscr{C}(X,Y)$ for the set of
correspondences between $X$ and $Y$.
We then have
\[
d_{\mathrm{GH}}(\mathrm{X},\mathrm{Y}) = \tfrac{1}{2}\inf \bigl\{ r\dvtx
\exists C \in\mathscr{C}(X,Y) \mbox{ such that } \operatorname{dis}(C) < r
\bigr\}
\]
and there is a correspondence which achieves this infimum.

Given a correspondence $C$ between $X$ and $Y$
and $\varepsilon\ge0$ write
\[
C_{\varepsilon} = \bigl\{(x,y) \in X \times Y\dvtx\exists \bigl(x',y'
\bigr) \in C, d_X \bigl(x,x' \bigr) \le\varepsilon,
d_{Y} \bigl(y,y' \bigr) \le\varepsilon \bigr\}
\]
and note that $C_{\varepsilon}$ is again a correspondence, with
distortion at
most $\operatorname{dis}(C) + 2\varepsilon$.
We call $C_{\varepsilon}$ the \textit{$\varepsilon$ blow-up of $C$}.

Let $\mathrm{X}=(X,d_X,(x_1,\ldots,x_k))$ and $\mathrm
{Y}=(Y,d_Y,(y_1,\ldots,y_k))$
be metric spaces, each with an ordered set of $k$ distinguished points
(we call such spaces \textit{\mbox{$k$-}pointed metric spaces}). When $k=1$, we
simply refer to pointed (rather than $1$-pointed) metric spaces, and
write $(X,d_X,x)$ rather than $(X,d_X,(x))$.
The \textit{$k$-pointed Gromov--Hausdorff distance} is defined as
\begin{eqnarray*}
&& d_{\mathrm{GH}}^k(\mathrm{X},\mathrm{Y})
\\
&&\quad = \tfrac{1}{2}\inf
\bigl\{ r\dvtx \exists C \in \mathscr{C}(X,Y)\mbox{ such that }
(x_i,y_i) \in C, 1 \leq i\leq k\mbox{ and }
\operatorname{dis}(C) < r \bigr\}.
\end{eqnarray*}
It is straightforward to verify that for each $k$, the space $(\mathcal{M}
^k,d_{\mathrm{GH}}^k)$ of marked isometry-equivalence classes of $k$-pointed
compact metric spaces, endowed with the distance $d_{\mathrm{GH}}^k$,
forms a
complete separable metric space.

\subsubsection*{Couplings and Gromov--Hausdorff--Prokhorov distance}
Let $(X,d,\mu)$ and $(X',d',\mu')$ be two measured metric spaces, and
let $\nu$ be a
Borel measure on $X \times X'$.
We say $\nu$ is a (\textit{defective}) \textit{coupling} between
$\mu$ and $\mu'$
if $p_*\nu\le\mu$ and $p'_*\nu\le\mu'$, where
$p\dvtx X\times X'\to X$ and $p'\dvtx X\times X'\to X'$ are the canonical
projections.
The \emph{defect} of $\nu$ is defined as
\[
D(\nu)=\max \bigl((\mu-p_*\nu) (X), \bigl(\mu'-p'_*
\nu \bigr) \bigl(X' \bigr) \bigr).
\]
We let $\mathcal{C}(\mu,\mu')$ be the set of couplings between $\mu$
and $\mu'$, and for $\varepsilon\ge0$ we write $\mathcal{C}_{\varepsilon
}(\mu,\mu')
= \{\nu\in\mathcal{C}(\mu,\mu')\dvtx D(\nu) \le\varepsilon\}$

The \emph{Prokhorov distance} between two finite positive
Borel measures $\mu,\mu'$ on the same space $(X,d)$ is
\begin{eqnarray*}
\ddd^\circ_{\mathrm{P}} \bigl(\mu,\mu' \bigr)
&=& \inf \bigl\{\varepsilon>0\dvtx\mu(F)\leq\mu' \bigl(F^\varepsilon
\bigr)+ \varepsilon\mbox{ and }\mu'(F)\leq\mu
\bigl(F^\varepsilon \bigr)+\varepsilon
\\
&&\hspace*{136pt} \mbox{ for every closed }F\subseteq X\bigr\},
\end{eqnarray*}
where $F^\varepsilon=\{x\in X\dvtx\exists x' \in F, d(x,x')<\varepsilon\}$.

There is another distance which generates the same topology and lends
itself more naturally to combination with the correspondences
introduced above.
We define
\[
\ddd_{\mathrm{P}} \bigl(\mu,\mu' \bigr)=\inf \bigl\{
\varepsilon>0\dvtx\exists\nu\in\mathcal{C}_{\varepsilon} \bigl(\mu,
\mu' \bigr),\nu \bigl( \bigl\{ \bigl(x,x'
\bigr) \in X \times X\dvtx d \bigl(x,x' \bigr)\geq\varepsilon \bigr\}
\bigr)< \varepsilon \bigr\}.
\]
By analogy with the latter, the \emph{Gromov--Hausdorff--Prokhorov}
(GHP) distance between $\mathrm{X}=(X,d,\mu)$ and $\mathrm
{X}'=(X',d',\mu')$ is
defined as
\[
d_{\mathrm{GHP}} \bigl(\mathrm{X},\mathrm{X}' \bigr) = \inf\lleft\{ \varepsilon > 0\dvtx\quad \matrix{ \exists\nu\in\mathcal{C}_\varepsilon
\bigl(\mu, \mu' \bigr)\mbox{ and } R \in\mathscr{C} \bigl(X,
X' \bigr)\mbox{ such that}
\vspace*{3pt}\cr
\nu \bigl(R^c
\bigr) < \varepsilon, \operatorname{dis}(R) < 2 \varepsilon} \rright\}.
\]
We always have $d_{\mathrm{GHP}}(\mathrm{X},\mathrm{X}') \geq
\ddd_{\mathrm
{GH}}(\mathrm{X},\mathrm{X}')$.
Similarly to before, the collection $\widehat{\mathcal{M}}$ of measured
isometry-equivalence classes of compact metric spaces, endowed with the
distance $d_{\mathrm{GHP}}$, forms a complete separable metric space
\cite{miermont2009tessellations}, Section~6.

Given $\mathrm{X}=(X,d_X,\mu,(x_1,\ldots,x_k))$ and $\mathrm
{X}'=(X',d',\mu
',(x_1',\ldots,x_k'))$,
two \mbox{$k$-}pointed measured metric spaces, we define
the \textit{$k$-pointed Gromov--Hausdorff--Prokhorov distance} as
\begin{eqnarray*}
&& d_{\mathrm{GHP}}^k \bigl(\mathrm{X},\mathrm{X}' \bigr)
\\
&&\qquad = \inf \lleft\{ \varepsilon> 0\dvtx\quad\matrix{ \exists\nu\in
\mathcal{C}_{\varepsilon
} \bigl(\mu, \mu' \bigr)\mbox{ and } R \in\mathscr{C} \bigl(X, X' \bigr)\mbox{ such that}
\vspace*{3pt}\cr
\nu \bigl(R^c \bigr) < \varepsilon, \operatorname{dis}(R) < 2\varepsilon
\mbox{ and } \bigl(x_i,x_i' \bigr)
\in R, 1 \leq i \leq k} \rright\}.
\end{eqnarray*}
Once again, we may define an associated complete separable metric space
$(\widehat{\mathcal{M}}^k,d_{\mathrm{GHP}}^k)$.


\section{Cutting down uniform Cayley trees}\label{seccayley}
\subsection{The Aldous--Broder dynamics}
Given a simple random walk $\{X_n\}_{n \in\mathbb N}$ on a finite connected
graph $G$, we may generate a spanning tree $T$ of $G$ by including all
edges $(X_k,X_{k+1})$ with the property that $X_{k+1} \notin\{X_i\}_{0
\leq i \leq k}$. The resulting tree $T$ is in fact almost surely a
uniformly random spanning tree of $G$. (More generally, if $G$ comes
equipped with edge weights $\{w_e\dvtx e \in e(G)\}$, then the probability
the simple random walk on the weighted graph $G$ generates a specific
spanning tree $t$ is proportional to $\prod_{e \in e(t)} w_e$.) This
fact was independently discovered by Broder \cite{Broder1989} and
Aldous \cite{Aldous1990}, and the above procedure is commonly called
the Aldous--Broder algorithm.

By reversibility, the tree $T$ generated by the Aldous--Broder
algorithm may instead be viewed as generated by a simple random walk $\{
X_n\}_{n \le0}$ on $G$, started from stationarity at time $-\infty$;
see \cite{lp}, pages 127--128. If instead of stopping the walk at time
zero we instead stop at time $i\ge0$, then the walk $\{X_n\}_{n \le
i}$ gives another tree, say $T_i$. What we call the \emph{Aldous--Broder
dynamics} is the (deterministic) rule by which the sequence $\{T_i,\break i
\ge0\}$ is obtained from $T_0$ and from the sequence $\{X_n,n \ge0\}
$. In the current section, we explain these dynamics. In the next
section, we introduce a modification of the Aldous--Broder dynamics,
and use it to exhibit the key coupling alluded to in Section~\ref{secintro}.

Recall that given a rooted tree $t$ and $x \in v(t)$, $t(x)$ denotes
the subtree of $t$ rooted at $x$. Fix an integer $n \geq1$ and a tree
$t$ on $[n]$, and let $\mathbf{x}= (x_i)_{i \in\mathbb N}$ be a
sequence of
elements of $[n]=\{1,2,\ldots,n\}$.

We then form a sequence of trees $\{T^m(t,\mathbf{x})\dvtx m \in
\mathbb N\}
$. First, $T^0=t$.
Then, for $m \ge0$, we proceed as follows:
\begin{itemize}
\item if $x_{m+1}=r(T^m)$, then $T^{m+1}=T^{m}$;
\item if $x_{m+1}\neq r(T^m)$, then form $T^{m+1}$ by removing the
unique edge of $T^m$ with tail $x_{m+1}$, then adding the edge
$(x_{m},x_{m+1})$, and finally rerooting at $x_{m+1}$.
\end{itemize}
In all cases, $r(T^m)=x_m$ for all $m \ge1$. We refer to this
procedure as \emph{the Aldous--Broder dynamics on $t$ and $\mathbf{x}$}.
One can equivalently think of the root vertex as being both the head
and tail of a directed loop; then one always removes the unique edge
with tail $x_{m+1}$ in $T^m$ and adds the directed edge
$(x_m,x_{m+1})$. Taking this perspective, let
$R_{m+1}=R_{m+1}(t,\mathbf{x})$
be the subtree of $T^m$ rooted at $x_{m+1}$, so $R_{m+1}=T^m(x_{m+1})$.
Let $K_{m+1}=K_{m+1}(t,\mathbf{x})$ be the other component created when
removing the edge with tail $x_{m+1}$, which is empty if $x_{m+1}=x_m$
and otherwise contains $x_{m}$. In all cases $T^{m+1}$ is obtained from
$R_{m+1}$ and $K_{m+1}$ by adding an edge from $x_m$ to $x_{m+1}$; see
Figure~\ref{figdoyle}.

%
\begin{figure}

\includegraphics{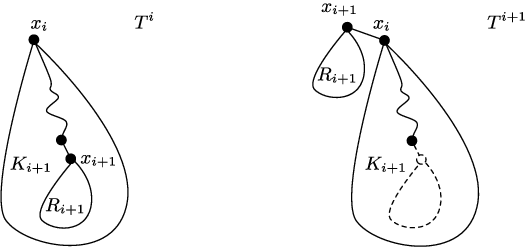}

%
%
\caption{Two successive trees $T^i$ and $T^{i+1}$ built from the
sequence construction: $T^{i+1}$ is obtained from $T^i$ by cutting
above $x_{i+1}$ and rearranging the parts in such a way that the
subtree above the cut is appended as a child of the root $x_{i+1}$ of
the subtree $R_{i+1}$ below the cut.}\label{figdoyle}
\end{figure}

\subsection{A modified Aldous--Broder dynamics}\label{sectrefor}
Say that a sequence $\mathbf{x}\in[n]^{\mathbb N}$ is \emph{good} if
for each $k \in[n]$, $\sup\{i\dvtx x_i=k\}=\infty$.
Fix a tree $t$ on $[n]$ and a good sequence $\mathbf{x}$.
We now describe a rule for removing a set of edges from $t$ to obtain
an ordered forest $\mathbf F=\mathbf F(t,\mathbf{x})$ on $[n]$.
[Recall that an \emph{ordered forest} is an ordered sequence
$(t_1,\ldots,t_k)$ of rooted trees.]

%
\begin{figure}

\includegraphics{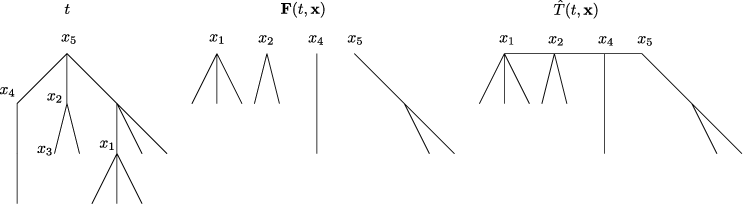}

\caption{Left: a tree $t$, with node labels suppressed for readability;
the first five nodes $x_1,\ldots,x_5$ of some good sequence are marked
in the figure. Center: the forest $\mathbf{F}(t,\mathbf{x})$ built by
applying the modified Aldous--Broder dynamics to $t$ with any sequence
$\mathbf{x}$ starting with $x_1,\ldots,x_5$. The trees are
$T_1(t,\mathbf{x}),\ldots,T_4(t,\mathbf{x})$ are shown from left to
right, and $r_1=x_1,r_2=x_2,r_3=x_4,r_4=x_5$. Right: the tree $\widehat
{T}(t,\mathbf{x})$, which has root $x_1$.}
\label{figmoddyn}
\end{figure}

In words, to build $\mathbf F(t,\mathbf{x})$ we start from the tree
$t$ and make the
cuts that are dictated by the sequence $\mathbf{x}$, but \emph
{ignore} any such
cuts that fall in a subtree we have already pruned at an earlier step.
Since $\mathbf{x}$ is good, we will eventually prune the root $r(t)$
and so we
will ignore all but finitely many of the cuts.

Formally, let $\sigma_0=0$ and, for $i \ge1$, let
\[
\sigma_i = \inf \Biggl\{m > \sigma_{i-1}\dvtx
x_m \notin\bigcup_{j=1}^{i-1}
t(x_{\sigma_j}) \Biggr\}.
\]
Then let $\kappa= \kappa(t,\mathbf{x}) = \inf\{i\dvtx x_{\sigma
_i}=r(t)\}$. Note
that we always have $\sigma_1=1$, that $\kappa< \infty$ since
$\mathbf{x}$ is
good, and that for all $j > \kappa$, $\sigma_j = \infty$.
Recall that we write $t=(v(t),e(t))$, where $v(t)$ and $e(t)$ denote
the vertex and edge set of $t$, respectively. After all the cuts in
$\mathbf{x}
$ have been made, we are left with a graph
\[
f = \bigl(v(t), e(t) \setminus \bigl\{ \bigl(x_{\sigma
_i},a(x_{\sigma_i})
\bigr),1 \le i \le\kappa \bigr\} \bigr).
\]
For $1 \le i \le\kappa$, let $T_i = T_i(t,\mathbf{x}) = C(x_{\sigma_i},f)$.
Note that $T_i$ is a tree, which we view as rooted at $x_{\sigma_i}$.
We then take
\[
\mathbf F= \mathbf F(t,\mathbf{x}) = (T_1,\ldots,T_\kappa).
\]
Write $r_i=r_i(t,\mathbf{x})$ for the root of $T_i$ and note that
$r_\kappa=r(t)$.
Finally, write $\widehat{T}=\widehat{T}(t,\mathbf{x})$ for the tree obtained
from the
forest $\mathbf F(t,\mathbf{x})$ by adding a directed edge from the
root of
$T_{i+1}$ to the root of $T_{i}$, for each $i \in[\kappa-1]$, and
rooted at $r_1$ (as suggested by the orientation of the edges). These
definitions are illustrated in Figure~\ref{figmoddyn}. We call this
procedure the \emph{modified Aldous--Broder dynamics on $t$ and
$\mathbf{x}$.}

\begin{re*}
The cutting procedure described above differs
slightly from that used in much of the work on the subject. More
precisely, it is more common to cut the tree by the removal of random
edges rather than the selection of random vertices. However, there is a
close correspondence between the vertex selection procedure and the
edge selection procedure on a planted version of the same tree, which
means results proved for one procedure have immediate analogues for the
other. In particular, Janson (\cite{Janson2004}, Lemma 6.1) analyzed
the difference between the two variants and showed that it is
asymptotically negligible.
\end{re*}

Now let $\mathcal{X}=(X_m)_{m \in\mathbb N}$ be a sequence of i.i.d.
uniform $\{
1,\ldots,n\}$ random variables.
It is easily seen that $\mathcal{X}$ is good with probability one.
The following theorem is the key fact underlying almost all the results
of the paper.
%
\begin{teo}\label{teokey}
Let $T$ be a uniform Cayley tree on $[n]$. Then for any tree $t$ on
$[n]$ and any $w \in[n]$,
\[
{\mathbf P}\bigl(\widehat{T}(T,\mathcal{X}) =t\mbox{ and } r(T)=w\bigr) =
n^{-n}.
\]
\end{teo}
Since there are $n^{n-1}$ labeled rooted trees on $[n]$, there are
$n^{n}$ possible ways to choose a labeled rooted tree on $[n]$, plus an\vspace*{1pt}
additional vertex of said tree. In other words, the theorem states that
$\widehat{T}(T,\mathcal{X})$ is a uniform Cayley tree, and that $r(T)$ is uniform
on $[n]$ and independent of $\widehat{T}(T,\mathcal{X})$
{(the fact that $r(T)$ is uniform on $[n]$ is immediate from the fact
that $T$ is a uniform Cayley tree).}
\begin{pf*}{Proof of Theorem~\ref{teokey}}
We proceed by induction on $n$, the case $n=1$ being trivial. So we now
suppose that $n>1$.
First, consider the case when \mbox{$w=r(\widehat T)$;} we have $r(T)=r(\widehat{T})$
precisely if $X_1=r(T)$ and in this case $\widehat{T}=T$. Thus, for any
rooted tree $t$ on $[n]$,
\[
{\mathbf P}\bigl(\widehat{T}=t,r(T)=r(\widehat{T})\bigr) = {\mathbf P}
\bigl(X_{1}=r(T),T=t\bigr) = \frac{1}{n}{\mathbf P}(T=t)=
\frac{1}{n^n},
\]
since $T$ is a uniform Cayley tree.

Next, fix a rooted tree $t$ on $[n]$ and any $w \in[n]$, $w \neq r(t)$.
Let $c=c(t,w)$ be the child of $r=r(t)$ for which the subtree of $t$
rooted at $c$ contains the node $w$. Let $t_r$ and $t_c$ be the
subtrees containing $r$ and $c$, respectively,\vspace*{1pt} when the edge $(c,r)$ is
removed from $t$.
If we are to have $r(T)=w$ and $\widehat{T}=t$, then $t_r$ must appear as a
subtree of $T$, and we must additionally have $X_{1}=r$. Since $T$ is a
uniform Cayley tree it follows that
%
\begin{eqnarray}\label{eqkey1}
&& {\mathbf P}\bigl(r(T) = w,\widehat{T}=t\bigr)\nonumber
\\
&&\qquad = {\mathbf P}\bigl(r(T)=w,\widehat{T}=t, t_r\mbox{ is a subtree of } T, X_{1}=r\bigr)
\\
&&\qquad = \frac{(n-|t_r|)^{n-|t_r|}}{n^{n-1}} \cdot\frac{1}{n} \cdot {\mathbf P}\bigl(r(T)=w,
\widehat{T}=t|t_r \mbox{ is a subtree of }T, X_{1}=r\bigr).\hspace*{-20pt}\nonumber
\end{eqnarray}
Now let $\mathcal{X}'=(X_i')_{i \in\mathbb N}$ be the subsequence of
$\mathcal{X}$ consisting
of the nodes of $K_1(T,\mathcal X)$, the connected component of $T$
containing the root after the edge above $X_1$ has been removed:
for $i \in\mathbb N$, let
\[
j_i=\min \bigl\{\ell\dvtx\bigl|\{X_{1},\ldots,X_{\ell}
\} \cap v \bigl(K_1(T,\mathcal{X}) \bigr)\bigr|=i \bigr\}
\]
and set $X_{i}'=X_{j_i}$.
Given that $t_r$ is a subtree of $T$ and $X_{1}=r$,
the entries of $\mathcal{X}'$ are independent, uniformly random
elements of
$v(t_c)$. Furthermore, under this conditioning we have that $\widehat
{T}(T,\mathcal{X})=t$ and $r(T)=w$ precisely if $\widehat
{T}(K_1(T,\mathcal{X}),\mathcal{X}')=t_c$
and $r(K_1(T,\mathcal{X}))=w$. Since $T$ is a uniform Cayley tree and
$K_1(T,\mathcal{X}
)$ is obtained from $T$ by removing the subtree rooted at $X_1$, it is
immediate that conditional on its vertex set, $K_1(T,\mathcal{X})$ is
again a
uniform Cayley tree (and has less vertices than~$T$). By induction, it
follows that
\begin{eqnarray*}
&& {\mathbf P}\bigl(r(T) = w, \widehat{T}=t|t_r \mbox{ is a subtree of }T, X_{1}=r\bigr)
\\
&&\qquad ={\mathbf P}\bigl(\widehat{T} \bigl(K_1(T,\mathcal{X}),
\mathcal{X}' \bigr)=t_c, r \bigl(K_1(T,
\mathcal{X}) \bigr)=w|t_r \mbox{ is a subtree of }T,
X_{1}=r\bigr)
\\
&&\qquad = |t_c|^{-|t_c|}.
\end{eqnarray*}
Since $|t_c|=n-|t_r|$, together with (\ref{eqkey1}) this yields that
${\mathbf P}(\widehat{T}(T,\mathcal{X}) =t$ and $r(T)=w) = n^{-n}$, as required.
\end{pf*}

We can transform the modified Aldous--Broder procedure for isolating
the root into an edge-removal procedure, as follows. First, plant the
tree to be cut at its root. Next, each time a node is selected for
pruning, instead remove the parent edge incident to each selected
vertex. The Aldous--Broder procedure then becomes the planted cutting
procedure described in the \hyperref[secintro]{Introduction}, and
$\kappa(T,\mathcal{X})$ is precisely the number of edges removed
before the
planted vertex is isolated.
But $\kappa(T,\mathcal{X})$ is also the number of vertices on the
path from
$r(\widehat{T})$ to $r(T)$ in $\widehat{T}$. By Theorem~\ref{teokey}, and from
known results about the distance between the root and a uniformly
random node in a uniform Cayley tree \cite
{MeMo1978,Kolchin1986,Aldous1991,Aldous1991b,Aldous1993a},
the case $k=1$ of Theorem~\ref{teokcoup} and of Corollary~\ref
{corbert} follow immediately.
By a well-known bijective correspondence between labeled rooted trees
with a distinguished vertex and ordered labeled rooted forests (see, e.g., \cite{AlPi1994}), Theorem~\ref{teouniforest}
also follows
immediately (the forest consists of the sequence of trees obtained when
removing the
edges on the path between the root and the distinguished vertex).

\begin{re*}
Aldous \cite{Aldous1991a} studied the
subtree rooted at a uniformly random node in a critical, finite
variance Galton--Watson tree conditioned to have size $n$. In
particular, he showed that such a subtree converges in distribution to
an \emph{unconditioned} critical Galton--Watson tree. It is then
straightforward that, for fixed $k\ge1$, the first $k$ trees that are
cut converge in distribution to a forest of $k$ critical Galton--Watson
trees. On the other hand, a critical Galton--Watson tree conditioned to
be large converges locally (in the sense of local weak convergence of
\cite{AlSt2003}, i.e., inside balls of arbitrary fixed radius $k$
around the root) to an infinite path of nodes having a size-biased
number of children (exactly one of which is again on the infinite
path), where each nonpath node is the root of an unconditioned
critical Galton--Watson tree.
This is the incipient infinite cluster for critical, finite variance
Galton--Watson trees \cite{kesten86sub}. Theorem~\ref{teouniforest}
then appears as a strengthening of this picture, valid only for Poisson
Galton--Watson trees, in which $k$ is allowed to grow with $n$.
\end{re*}

Recall that $T$ is a uniform Cayley tree on $[n]$ and that $\mathcal{X}
=(X_m)_{m \in\mathbb N}$ is a sequence of i.i.d. uniform elements of $[n]$.
In the next proposition, which is \mbox{essentially} a time-reversed version
of Theorem~\ref{teokey}, we write $\mathbf F(T,\mathcal{X}) = \mathbf
F$ for readability.
%
\begin{prop}\label{propreverse}
For any forest $\mathbf{f}=(t_1,\ldots,t_k)$ on $[n]$, given that
$\mathbf F
=\mathbf{f}$, independently for each $i \in[k-1]$ the parent
$a(r(t_i),T)$ of $r(t_i)$ in $T$ is a uniformly random element of
$\bigcup_{j=i+1}^k v(t_j)$.
\end{prop}
\begin{pf}
If $k=1$, then there is nothing to prove. If $k > 1$, then
fix any sequence $\mathbf{v}=(v_1,\ldots,v_{k-1})$ with $v_i \in
\bigcup_{j=i+1}^k v(t_j)$ for each $i \in[k-1]$. Write $t(\mathbf
{f},\mathbf
{v})$ for the tree formed from $\mathbf{f}$ by adding an edge from
$r(t_i)$ to $v_{i}$ for each $i \in[k-1]$.
In order that $\mathbf F=\mathbf{f}$ and that, for each $i \in[k-1]$,
$a(r(t_i),T)=v_{i}$, it is necessary and sufficient that $T=t(\mathbf
{f},\mathbf{v})$ and that for each $i \in[k]$, $X_{\sigma_i} = r(t_i)$.
The probability that $T=t(\mathbf{f},\mathbf{v})$ is $n^{-(n-1)}$.
Furthermore, since $(X_m)_{m \in\mathbb N}$ are i.i.d. elements of $[n]$,
\[
{\mathbf P}\bigl(X_{\sigma_i}=r(t_i), 1\le i\le k | T=t(
\mathbf{f}, \mathbf{v})\bigr) = \prod_{i \in[k]}
\frac{1}{|\bigcup_{j \geq i} v(t_j)|}.
\]
It follows that
\[
{\mathbf P}\bigl(\mathbf F=\mathbf{f}\mbox{ and } a
\bigl(r(t_i),T \bigr)=v_{i}, 1\le i<k\bigr) =
\frac{1}{n^{n-1}} \cdot\prod_{i \in[k]} \frac{1}{|\bigcup_{j \geq i} v(t_j)|},
\]
which proves the proposition since this expression does not depend on
$v_1, \ldots,\break v_{k-1}$.
\end{pf}

Theorem~\ref{teoreverse} is an immediate consequence of
Proposition~\ref{propreverse}.

\section{Isolating more than one vertex}\label{secmorethanone}
In this section we describe how to generalize the arguments of
Section~\ref{sectrefor} to
obtain results on isolating sets of vertices of size greater than one.
Recall that when performing the planted cutting of $S$ in $t$,
described in Section~\ref{secintro}, we
wrote $W=\{w_1,\ldots,w_k\}$ for the set of new vertices, and wrote
$M=M(t,S)$ for the (random) total number of edges removed.
In order to study the random variable $M$, it turns out to be necessary
to study a transformation of the planted cutting procedure.
The modified procedure is defined via a canonical re-ordering of the
sequence of removed edges.
As such, it may be coupled with the original procedure so that the
final set of removed edges is the same in both.
In particular, both procedures isolate the vertices of $W$, and the
total number of cuts has the same distribution in both.

In the following, for an edge $e$ and a connected component $C$, we
write $e\in C$ to mean that \emph{both} endpoints of $e$ lie in $C$, or
equivalently (since the connected components are trees) that the
removal of $e$ leaves $C$ disconnected.
Also, recall from Section~\ref{secnotation} that given a set $A$ of
edges, we write $t\setminus A$ for the graph \mbox{$(v(t),e(t)\setminus A)$}.

Now fix a sequence $\mathbf{e}=(e_1,\ldots,e_m)$ of distinct edges of
$t$. We
say that $\mathbf{e}$ is a \emph{possible cutting sequence} (for $S$
in $t$) if:
\begin{itemize}
\item each edge $\{v_i,w_i\}$, $1 \le i \le k$ appears in $\mathbf{e}$
($\mathbf{e}$
really isolates $w_1,\ldots, w_k$), and
\item for each $1 \le j \le m$, one has $e_j\in C(W, t\setminus\{
e_1,\ldots, e_{j-1}\})$, that is, each $e_j$ indeed produces a cut.
\end{itemize}
We now describe a canonical re-ordering of $\mathbf{e}$, which we
denote $\mathbf{e}
^*$; this re-ordering operation gives rise to the modified cutting
procedure. In $\mathbf{e}^*$, we first list all edges whose removal decreases
the size of the component containing $w_1$ (in increasing order of
arrival time). We then list all \emph{remaining} edges whose removal
decreases the size of the component containing $w_2$, again in
increasing order of arrival time, and so on. (This is somewhat related
to a size-biased reordering of an exchangeable random structure; see
\cite{Pitman2006}, Chapter~1. The next three paragraphs formalize this
description.)

For $1 \le i \le k$, write
\[
U_i = U_i(\mathbf{e}) = \bigl\{j\dvtx e_{j}
\in C \bigl(w_i, t\setminus\{e_1,\ldots,
e_{j-1}\} \bigr) \bigr\}
\]
and let $U_i^* = U_i \setminus(\bigcup_{j=1}^{i-1} U_j)$. In words,
$U_i^*$ is the set of times $j$ at which the component containing $w_i$
does not contain any of $w_1,\ldots,w_{i-1}$, and such that removing
the current edge $e_j$ decreases the size of this component.

Next, let $m(i) = m(i,t,\mathbf{e}) = |U_i|$, write $Z_i=Z_i(\mathbf
{e}) =
(z_{i,1},\ldots,z_{i,m(i)})$ for the sequence obtained by listing the
elements of $U_i$
in increasing order, and define $Z_i^*$ accordingly.
Notice that once $w_i$ is in a component distinct
from $w_1,\ldots,w_{i-1}$, it~can never rejoin such a component, and so
writing $s(i)=s(i,t,\mathbf{e})=\min\{\ell\dvtx z_{i,\ell} \in
U_i^*\}$, we must have
\[
Z_i^* = (z_{i,s(i)},z_{i,s(i)+1},\ldots,z_{i,m(i)}).
\]
We then write
\begin{eqnarray*}
\mathbf{e}^* & =& (e_{z_{1,s(1)}},\ldots,e_{z_{1,m(1)}},e_{z_{2,s(2)}},
\ldots,e_{z_{2,m(2)}},\ldots,e_{z_{k,s(k)}},\ldots,e_{z_{k,m(k)}})
\\
& =& \bigl(e_1^*,\ldots,e_m^* \bigr),
\end{eqnarray*}
the latter equality constituting the definition of $e_1^*,\ldots,e_m^*$.
For $1 \le i \le k$, let $a_i(t,\mathbf{e}^*)= 1+ \sum_{\ell
=1}^{i-1} (m(\ell
)-s(\ell)+1)$ let $b_i(t,\mathbf{e}^*) = \sum_{\ell=1}^{i} (m(\ell
)-s(\ell)+1)$,
and set
\[
\mathbf{e}^*_i = \bigl(e^*_j, a_i \le j \le
b_i \bigr) = \bigl(e_{z_{i,j}},s(i) \le j \le m(i) \bigr).
\]
We remark that necessarily $e_{z_{i,m(i)}} = \{w_i,v_i\}$, and so in
particular the sequence $\mathbf{e}^*_i$ is nonempty for each $1 \le i
\le k$.

Now write $\mathbf E=\mathbf E(t,S)=(E_1,\ldots,E_M)$ for the random
sequence of
removed edges (in the original planted cutting procedure),
write $\mathbf E^*=\break \mathbf E^*(t,S)=(E_1^*,\ldots,E_M^*)$ for the
rearrangement of
$\mathbf E$ described above, and likewise define $\mathbf E^*_i$, for
$1 \le i \le
k$, as above.

It is easily seen that if $\mathbf{e}$ is not a possible cutting
sequence, then ${\mathbf P}(\mathbf E(t,S)=\mathbf{e})=0$, and if
$\mathbf
{e}$ \emph{is} a possible
cutting sequence, then
%
\begin{eqnarray}
{\mathbf P}\bigl(\mathbf E(t,S)=\mathbf{e}\bigr) &=&\prod
_{j=1}^{m} \frac{1} {|e(C(W, t\setminus\{e_1,\ldots, e_{j-1}\}
))|}.
\end{eqnarray}
For our purposes, it is in fact the expression for ${\mathbf
P}(\mathbf E^*(t,S) = \mathbf{e} ^*)$ given in the following lemma
that will be more useful.
Fix any sequence $\mathbf{f}=(f_1,\ldots,f_m)$ of edges of $t\langle
S \rangle$. If
there exists a possible cutting sequence $\mathbf{e}=(e_1,\ldots,e_m)$ for
$S=(v_1,\ldots,v_k)$ in $t$ such that $\mathbf{e}^*=\mathbf{f}$,
then we say that
$\mathbf{f}$ is \emph{valid} (\textit{for $t$ and $S$}).
%
\begin{lem}
Given any sequence $\mathbf{f}=(f_1,\ldots,f_m)$ that is valid for
$t$ and
$S$, we have
\[
{\mathbf P}\bigl(\mathbf E^*(t,S) = \mathbf{f}\bigr) = \prod
_{i=1}^k \prod_{j=a_i(t,\mathbf{f})}^{b_i(t,\mathbf{f}
)}
\frac{1}{|e(C(w_i,t\setminus\{f_1,\ldots,f_{j-1}\}))|}.
\]
\end{lem}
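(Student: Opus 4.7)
The plan is to show that $\bE^*$ has the same distribution as the sequence $\tilde{\bE}=(\tilde E_1,\ldots,\tilde E_m)$ produced by the following sequential cutting procedure: start from $\tilde F^{(0)}=t\pangle{S}$ and, for $i=1,\ldots,k$ in order, while $w_i$ is not isolated, pick $\tilde E$ uniformly from $e(C(w_i,\tilde F^{(\cdot)}))$ and remove it. The probability of a given valid $\bbf$ under $\tilde{\bE}$ is, by the chain rule, exactly the product on the right-hand side of the lemma, so it suffices to show $\bE^*\stackrel{d}{=}\tilde{\bE}$.

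I would argue by induction on $k$. The case $k=1$ is trivial, since then $\bE^*=\bE$ coincides with $\tilde{\bE}$. For the inductive step, the key observation is that the subsequence $\bE^*_1$ of cuts of label $1$ (in the order they appear in $\bE$) has the same distribution as the sequential procedure for isolating $w_1$ alone. Indeed, at each step of the original procedure, $e(C(W,\cdot))$ decomposes as the disjoint union of $e(C(w_1,\cdot))$ and the edges of the $w$-components that do not contain $w_1$; conditional on the current cut falling in $C(w_1,\cdot)$, it is uniformly distributed there. Moreover, cuts outside $C(w_1,\cdot)$ occur in components disjoint from $w_1$'s component, so they leave $C(w_1,\cdot)$ unchanged. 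Aggregating only the steps of $\bE$ whose label is $1$ therefore produces exactly the Markov dynamics of the sequential procedure for $w_1$.

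To close the induction I would show that, conditionally on $\bE^*_1$, the remaining subsequence $(\bE^*_2,\ldots,\bE^*_k)$ is distributed as the modified Aldous--Broder procedure applied to $t\pangle{S}\setminus \bE^*_1$ with the reduced planted sequence $S'=(v_2,\ldots,v_k)$. (This invokes the lemma for a starting configuration that is a forest rather than a planted tree, but the definitions and proof extend verbatim.) The inductive hypothesis then identifies $(\bE^*_2,\ldots,\bE^*_k)$ with the sequential procedure for $(w_2,\ldots,w_k)$ on the residual graph, and combined with the identification of $\bE^*_1$ this yields $\bE^*\stackrel{d}{=}\tilde{\bE}$.

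The main obstacle is making rigorous the conditional-independence statement underlying the last step: label-$1$ cuts both shrink $C(w_1,\cdot)$ and can inject new components into the non-$w_1$ pool (by severing some $w_i$ from $w_1$'s component), and one must verify that this interaction does not disturb the uniform-sampling rule governing non-label-$1$ steps. A convenient formalization attaches an independent rate-$1$ exponential clock to each edge of $t\pangle{S}$, removing the edge at its ring only if it then lies in $C(W,\cdot)$; the memoryless property transparently identifies the label-$1$ subsequence with the sequential procedure for $w_1$, and analogous reasoning (after the label-$1$ removals are applied) yields the required factorization of the residual cuts.
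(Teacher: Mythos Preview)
Your approach is correct and takes a genuinely different route from the paper's. The paper inducts on $|e(t\pangle{S})|$: it conditions on the first removed edge $E_1$, notes that $E_1^*=E_1$ always, and then branches. If $E_1=\{v_1,w_1\}$ the problem reduces to $S'=(v_2,\ldots,v_k)$; otherwise removing $E_1$ splits $t\pangle{S}$ into two smaller planted trees, on which the subsequent cuttings are conditionally independent, and the paper checks that the $*$-reordering decomposes accordingly across the two pieces. This is a local, one-edge-at-a-time argument that needs no auxiliary randomization but does require tracking how $\be^*$ splits over the two components.

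Your induction on $k$ peels off all of $\bE^*_1$ at once. The exponential-clock device cleanly gives the marginal law of $\bE^*_1$. One point you pass over a bit quickly, though, is why the interleaving across residual pieces is irrelevant. Conditionally on $\bE^*_1$, the components $Q_1,\ldots,Q_{m_1}$ cut away from $w_1$ are born at distinct (random) times $\tau_1<\cdots<\tau_{m_1}$, so even after memorylessness the raw order of the edges in $\bE\setminus\bE^*_1$ is \emph{not} that of the planted cutting of $S'$ on the residual forest: cuts in early-born pieces tend to precede cuts in late-born ones. What saves the argument is that the $*$-reordering is insensitive to cross-piece order: for each $i\ge 2$, every cut in $U_i\setminus U_1$ lies in the single $Q_j$ containing $w_i$, so $(\bE^*_2,\ldots,\bE^*_k)$ is a deterministic function of the within-$Q_j$ orderings alone, and those have the correct conditional law by memorylessness. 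Once this invariance is stated and checked, your induction closes. (A minor terminological slip: what you call the ``modified Aldous--Broder procedure'' here is the paper's \emph{ordered cutting}, not the construction of Section~\ref{sec:trefor}.)
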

\begin{pf}
We prove the lemma by induction on $|e(t\langle S \rangle)|$. Fix
$\mathbf
{f}$ as in
the statement of the lemma, write
\[
\mathcal{E}(\mathbf{f})=\mathcal{E}(\mathbf{f},t,S)= \bigl\{ \mathbf{e}\dvtx
\mathbf{e}\mbox{ is a possible cutting sequence for $S$ in $t$ and }
\mathbf{e}^*=\mathbf{f} \bigr\}
\]
and note that $\mathbf{f}\in\mathcal{E}(\mathbf{f})$. For any
$\mathbf{e}=(e_1,\ldots,e_m) \in
\mathcal{E}(\mathbf{f})$ we necessarily have \mbox{$e_1=f_1$}, and so
\[
{\mathbf P}\bigl(\mathbf E_1^*(t,S) =f_1\bigr) = {
\mathbf P}\bigl(\mathbf E_1(t,S) =f_1\bigr) =
\frac{1}{|e(t\langle S \rangle)|}.
\]
If $e_1=\{v_1,w_1\}$, then writing $S'=(v_2,\ldots,v_k)$, we have
\begin{eqnarray*}
{\mathbf P}\bigl(\mathbf E^* =\mathbf{f}| \mathbf E^*_1
=f_1\bigr) & =& {\mathbf P}\bigl(\mathbf E^* =\mathbf{f}| \mathbf
E_1 =f_1\bigr)
\\
& =& {\mathbf P}\bigl(\mathbf E^* \bigl(t,S' \bigr)=(f_2,
\ldots,f_m)\bigr)
\end{eqnarray*}
and the result follows by induction since $t\langle S' \rangle$ has fewer
edges than $t\langle S \rangle$.\vadjust{\goodbreak}

If $e_1 \ne\{v_1,w_1\}$, then
write $t_1=C(w_1,t\langle S \rangle\setminus\{e_1\})$, and write
$t_2$ for
the other component of $t\langle S \rangle\setminus\{e_1\}$;
each of these trees has fewer edges than $t\langle S \rangle$.
Write $S_1=(x_1,\ldots,x_{k_1})$ and $S_2=(y_1,\ldots,y_{k_2})$ for the
nodes of $S$ within $t_1$ and $t_2$, respectively, listed in the same
order as in $S$.

Now fix any possible cutting sequence $\mathbf{e}=(e_1,\ldots,e_m)$
with $e_1=f_1$.
Write $\mathbf{e}^{(1)}$ and $\mathbf{e}^{(2)}$ for those edges in
the sequence
$(e_2,\ldots,e_m)$ falling in $t_1$ and $t_2$, respectively, and listed
in the same order as in $\mathbf{e}$.
Then it is clear that, conditionally on $\mathbf E_1=f_1$, the
sequences $\mathbf E
(t_1,S_1)$ and $\mathbf E(t_2,S_2)$ have the distribution of the planted
cutting procedure on $t_1\langle S_1 \rangle$ and $t_2\langle S_2
\rangle$,
respectively, and are independent. In other words,
\begin{eqnarray*}
&&\mathbf{P}\bigl( \mathbf E(t_1,S_1)=
\mathbf{e}^{(1)}, \mathbf E(t_2,S_2)=
\mathbf{e}^{(2)} | \mathbf E_1=f_1\bigr)
\\
&&\qquad =  {
\mathbf P}\bigl(\mathbf E(t_1,S_1)=\mathbf{e}^{(1)}
\bigr) \cdot{\mathbf P}\bigl(\mathbf E(t_2,S_2)=
\mathbf{e}^{(2)}\bigr).
\end{eqnarray*}
Furthermore, if $\mathbf{e}\in\mathcal{E}(\mathbf{f})$, then
$e_1=f_1$, and $\mathbf{e}\in\mathcal{E}
(\mathbf{f})$
if and only if $\mathbf{e}^{(1)}\in\mathcal{E}(\mathbf
{f}^{(1)},t_1,S_1)$ and $\mathbf{e}^{(2)}
\in\mathcal{E}(\mathbf{f}^{(2)},t_2,S_2)$.
[Note: this does not mean that the map from $\mathbf{e}$ to $(\mathbf
{e}^{(1)},\mathbf{e}
^{(2)})$ is bijective! In fact, for a given pair $\mathbf{e}^{(1)}\in
\mathcal{E}(\mathbf{f}
^{(1)},t_1,\break S_1)$ and $\mathbf{e}^{(2)} \in\mathcal{E}(\mathbf
{f}^{(2)},t_2,S_2)$, the
number of pre-images in $\mathcal{E}(\mathbf{f})$ is precisely ${m -
1 \choose m_1}$,
where $m_1$ is the length of $\mathbf{f}^{(1)}$.] Also, $\mathbf
{f}^{(1)}$ (resp.,
$\mathbf{f}^{(2)}$) is valid for $t_1$ and $S_1$ (resp., for $t_2$ and $S_2$).
It follows that
\begin{eqnarray*}
&& {\mathbf P}\bigl(\mathbf E^*=\mathbf{f}|\mathbf E_1=f_1
\bigr)
\\
&&\quad= \sum_{\mathbf{e}\in\mathcal{E}} {\mathbf P}(\mathbf E=
\mathbf{e}| \mathbf E_1=f_1)
\\
&&\quad= \sum_{\mathbf{e}^{(1)}\in\mathcal{E}(\mathbf
{f}^{(1)},t_1,S_1)} \sum
_{\mathbf{e}
^{(2)}\in\mathcal{E}(\mathbf{f}^{(2)},t_2,S_2)} {\mathbf P}\bigl(\mathbf E(t_1,S_1)=
\mathbf{e}^{(1)},\mathbf E(t_2,S_2)=
\mathbf{e}^{(2)}| \mathbf E_1=f_1\bigr)
\\
&&\quad= \sum_{\mathbf{e}^{(1)}\in\mathcal{E}(\mathbf
{f}^{(1)},t_1,S_1)} \sum
_{\mathbf{e}
^{(2)}\in\mathcal{E}(\mathbf{f}^{(2)},t_2,S_2)} {\mathbf P}\bigl(\mathbf E(t_1,S_1)=
\mathbf{e}^{(1)}\bigr) \cdot{\mathbf P}\bigl(\mathbf
E(t_2,S_2)= \mathbf{e}^{(2)}\bigr)
\\
&&\quad= {\mathbf P}\bigl(\mathbf E^*(t_1,S_1)=
\mathbf{f}^{(1)}\bigr) \cdot{\mathbf P}\bigl(\mathbf E^*(t_2,S_2)
= \mathbf{f}^{(2)}\bigr)
\end{eqnarray*}
from which the result again follows by induction.
\end{pf}

The formula in the preceding lemma implies that
removing edges in the order given by $\mathbf E^*$ corresponds to the
following procedure. For each $1 \le i \le k$, in that order, remove
edges of $t$ uniformly at random from among those whose removal reduces
the size of the component currently containing $w_i$, until $w_i$ is isolated.
We call this the \emph{ordered cutting} of $S$ in $t$.

For $1 \le i \le k$, write $M_i$ for the random time at which $w_i$ is
isolated in the ordered cutting procedure
\begin{eqnarray*}
M_i&=&M_i(t,S) =\max \bigl\{j\dvtx E_j^*
\in C \bigl(w_i,t\setminus \bigl\{E_1^*,
\ldots,E_{j-1}^* \bigr\} \bigr) \bigr\}
\\
& =& \min \bigl\{j\dvtx\bigl|C \bigl(w_i,t\setminus \bigl
\{E_1^*, \ldots,E_{j}^* \bigr\} \bigr)\bigr| =0 \bigr\}
\end{eqnarray*}
(recall that the counting does not include planted vertices),
and note that $M_1<M_2<\cdots<M_k\stackrel{d}{=}M$.

Now, let $T$ be a uniform Cayley tree on $[n]$, let $V_1,\ldots,V_k$ be
independent, uniformly random elements of $[n]$, and let
$S_k=(V_1,\ldots,V_k)$.
Then write $M_k=M(T,S_k)$ for the number of edges removed during the
ordered cutting of $S_k$ in $t$.
%
\begin{teo}\label{teoknodes}
$M_k-k$ is distributed as the number of edges spanned by the root plus
$k$ independent, uniformly random nodes in a uniform Cayley tree of
size~$n$.
\end{teo}
Theorem~\ref{teokcoup} follows immediately from Theorem~\ref
{teoknodes} and the relationship between planted cutting and ordered
cutting described above.
To prove Theorem~\ref{teoknodes}, we will exhibit a coupling which
generalizes that of Section~\ref{sectrefor} and which we now explain.
The coupling hinges upon the following, easy lemma, whose proof is
omitted. Recall that if $S$ is a set of nodes in a tree $t$, then
$t{\llbracket S \rrbracket}$ is the subtree of $S$ spanned by $S$.
%
\begin{lem}\label{lemrandsubtree}
Fix $i \ge1$. Let $T$ be a uniform Cayley tree on $[n]$, let
$V_1,\ldots,V_{i+1}$ be independent, uniformly random elements of
$[n]$, and let $S=\{r(T),V_1,\ldots,V_i\}$.
Let $U$ be the most recent ancestor of $V_{i+1}$ in $T$ which is an
element of $v(T{\llbracket S \rrbracket})$.
Let $R$ be the set of nodes whose path to $V_{i+1}$ uses no edges of
$T{\llbracket S \rrbracket}$ (such paths may pass through $U$).
Let $T^+=T{\llbracket R \rrbracket}$, let $T^-=T{\llbracket
([n]\setminus R)\cup\{U\} \rrbracket}$ and root
$T^+$ and $T^-$ at $U$ and at $r(T)$, respectively.
Then conditionally on $R$, $T^+$ is a uniformly random labeled rooted
tree on $R$, independent of $T^-$ and of $V_1,\ldots,V_i$, and
$V_{i+1}$ is a uniformly random element of $R$ independent of $T^+,T^-$
and $V_1,\ldots,V_i$.
\end{lem}
The definitions in Lemma~\ref{lemrandsubtree} are depicted in
Figure~\ref{figlemma8}.
%
\begin{figure}

\includegraphics{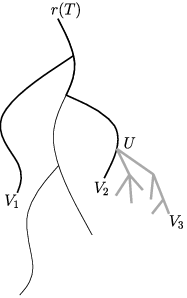}

\caption{An example of the definitions of Lemma~\protect\ref
{lemrandsubtree} in the case $i=2$ [so $S=(r(T),V_1,V_2)$]. The
subtree $T{\llbracket S \rrbracket}$ is in thicker black lines. The
tree $T^+$ is in
thick grey lines, and the tree $T^-$ consists of all black lines (thick
and thin).}
\label{figlemma8}
\end{figure}

\begin{pf*}{Proof of Theorem \ref{teoknodes}}
We provide a coupling between the random sequence of edges $\mathbf E
^*(T,(V_1,\ldots,V_k))$ and a sequence $T_1,\ldots,T_k$ of trees on
$[n]$, such that the following properties hold.
First, for
any rooted tree $t$ on $[n]$, and any $v_1,\ldots,v_i$ elements of
$[n]$ (not necessarily distinct),
%
\begin{equation}
\label{equniformonk} {\mathbf P}(T_i=t, V_{1}=v_1,
\ldots,V_{i}=v_i) = n^{-(n-1+i)}.
\end{equation}
Second, for each $1 \le i \le k$, the following holds:
\begin{itemize}[($\star$)]
\item[($\star$)] the forest obtained from $T\langle(V_1,\ldots,V_i)
\rangle$
by first removing all edges of $\{E^*_1,\ldots,E^*_{M_i}\}$, then
deleting $w_1,\ldots,w_i$, is identical
to the forest obtained from $T_i$ by removing all edges of its subtree
$T_i{\llbracket r(T_i),V_1,\ldots,V_i \rrbracket}$.
\end{itemize}
Equation (\ref{equniformonk}) says that $T_i$ is a uniform Cayley tree
and $V_1,\ldots,V_i$ are independent of $T_i$, and ($\star$) then
implies in particular (by considering only the case $i=k$) that $M_k-k$
is equal to the number of edges of $T_k{\llbracket r(T_k),V_1,\ldots,V_k \rrbracket}$.
This clearly implies the theorem, and so it remains to explain how we
construct such a sequence.

Fix a sequence $\mathcal{X}=(X_i)_{i \ge1}$ of i.i.d. uniform
elements of $[n]$.
Let $T_1$ be the tree built by running the modified Aldous--Broder
dynamics on $T^{r \leftrightarrow V_1}$ (recall that this is the tree
$T$, rerooted
at node $V_1$) with the sequence $(X_i)_{i \ge1}$. [In the notation of
Section~\ref{sectrefor},
$T_1=\widehat{T}(T^{r \leftrightarrow V_1},\mathcal{X})$.] By
Theorem~\ref{teokey}, for any
tree $t$ on $[n]$ and any $v \in[n]$, ${\mathbf
P}(T_1=t,V_1=v)=n^{-n}$, so
(\ref{equniformonk}) holds in the case $i=1$.
Temporarily write $u_1,\ldots,u_\ell$ for the nodes on the path in
$T_1$ from $r(T_1)$ to $V_1$, in the same order they appear on that
path. We must then have $u_\ell=V_1$,
and $M_1=\ell$. For $1 \le j \le\ell-1$, let $E_j^*=\{u_j,a(u_j,T^{r
\leftrightarrow V_1})\}$, and note that this is also an edge of $T$
since $T$ and
$T^{r \leftrightarrow V_1}$ have the same edge set.
Then let $E_{M_1}^* = \{u_{\ell},w_1\}=\{V_1,w_1\}$. (An example of
this construction is shown in Figure~\ref{figmultdyn}.)
By construction, it is immediate that ($\star$) then holds in the case $i=1$.

%
\begin{figure}

\includegraphics{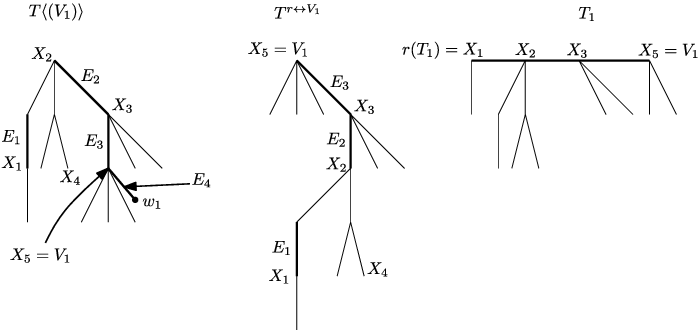}

\caption{Left: the tree $T\langle(V_1)\rangle$. Center: the tree $T^{r
\leftrightarrow V_1}$, planted at $V_1$.
Right: the tree $T_1$.
The vertex and edge labels provide an example of the construction in
the proof of Theorem~\protect\ref{teoknodes}, in the case
$k=1$. For each of the three trees, the forest obtained by removing the
bold edges [and, for $T\langle(V_1)\rangle$, then throwing away the
vertex $w_1$] is identical.}\label{figmultdyn}
\end{figure}

Now fix $1 \le j < k$, suppose that $T_1,\ldots,T_j$ and
$(E_1^*,\ldots,E_{M_j}^*)$ are already defined and that (\ref
{equniformonk}) and
$(\star)$ both hold for each $1 \le i \le j$.
As defined, $V_{j+1}$ is independent of $T_i$ and of $(E_1^*,\ldots,E_{M_j}^*)$, and so for any tree $t$ on $[n]$ and any sequence
$u_1,\ldots,u_{j+1}$ of elements of $[n]$, we have
%
\begin{equation}
\label{eqinductivestep} {\mathbf P}(T_j=t, V_{1}=u_1,
\ldots,V_{j+1}=u_{j+1}) = n^{-(n-1+i+1)}.
\end{equation}
Let $U$ be the most recent ancestor of $V_{j+1}$ that lies in
$T_j{\llbracket r(T_i),V_1,\ldots,V_j \rrbracket}$, and define $T^+$
and $T^-$
as in Lemma~\ref{lemrandsubtree}.

Now let $\mathcal{X}'$ be a random sequence such that conditionally on
$v(T^+)$, the entries of $\mathcal{X}'$ are independent uniform
elements of
$v(T^+)$, independent of
all preceding randomness. Then apply the modified Aldous--Broder
dynamics to $T^{+,r \leftrightarrow V_{j+1}}$, and call the result $T^*$.
By Theorem~\ref{teokey}, given that $v(T^+)=S$, $(T^+,V_{j+1})$ and
$(T^*,V_{j+1})$ are identically distributed.
As above, let $u_1,\ldots,u_\ell$ be the nodes on the path from
$r(T^*)$ to $V_{j+1}$, and note that we must have $M_{j+1}=M_j+\ell$.
For $1 \le i \le\ell-1$ let
$E^*_{M_i+j} = \{u_i,a(u_i,T^{+,r \leftrightarrow V_{j+1}})\}$, and let
$E^*_{M_{j+1}}=\{V_{j+1},w_{j+1}\}$.
In words, we have applied exactly the same construction as in the case
$i=1$, but to the subtree $T^+$ of $T$ (which contains $V_{j+1}$).
Figures~\ref{figlemma8} and~\ref{figmultdyn} may be useful as visual
aids to these definitions.

Write $P$ for the parent of $U$ in $T_j$, and $C_1,\ldots,C_{\ell}$ for
the children of $U$ in $T_j\setminus T^+$ (any such child is an
ancestor of at least one of $V_1,\ldots,V_j$). Now let $T_{j+1}$ be the
tree obtained from $T_j$ by replacing $T^+$ by $T^*$.
In other words, $T_{j+1}$ is built from $T_j$ by, first, removing all
edges of $T_j$ that are incident to nodes of $T^+$ and then, second,
adding all edges of $T^*$ as well as edges from the root of $T^*$ to
$P$ and to each of $C_1,\ldots,C_{\ell}$.
With this construction, $(\star)$ now holds for all $1 \le i \le j+1$.

Finally, write $R=v(T^+)$. By Lemma~\ref{lemrandsubtree} and by
Theorem~\ref{teokey}, $(T^+,V_{j+1})$ and $(T^*,V_{j+1})$ are
identically distributed conditionally on their vertex sets, and both
are independent of $T^-$ and of $V_1,\ldots,V_j$. It follows that
(\ref
{eqinductivestep}) still holds with $T_j$ replaced by $T_{j+1}$, and
this verifies (\ref{equniformonk}) and completes the proof by induction.
\end{pf*}

\section{A novel transformation of the Brownian CRT}\label{secnovel}

In \cite{Janson2006c}, Janson suggested that it should be possible to
define a version of the cutting procedure directly on $\mathcal{T}$.
In this
section, we provide such a
construction. This construction yields straightforward, ``conceptual''
proofs of some of the main results of \cite{Janson2006c}, and also
provides a novel, reversible transformation
from $\mathcal{T}$ to another, doubly-rooted Brownian CRT.
(We remark in passing that the results of this section can also be
straightforwardly used to prove the first convergence result from
Theorem 1.10 of \cite{Janson2006c}.) Using the by now well-known coding
of the Brownian CRT by a standard Brownian excursion, this
transformation can be viewed as a new, invertible random transformation
between Brownian excursion and Brownian Bridge.

We now describe the details of the construction, using the language of
\mbox{$\mathbb{R}$-}trees.
For the interested reader, we describe the corresponding transformation
from Brownian excursion to reflecting Brownian bridge in Appendix~\ref{secapp}.

We begin with a quick, high-level description of the transformation. An
initial compact real tree $\mathcal{T}$ distributed as the Brownian
CRT will be
cut by points falling on its skeleton. When a point arrives, the
current tree is separated into two connected components; the one
containing the root will suffer further cuts at later times, while the
other one---the pruned tree---will no longer be cut. As in the discrete
transformation of Section~\ref{sectrefor}, the cut trees are
rearranged by attaching their roots to a ``backbone'' so as to form a
new real tree. We now describe the continuous transformation by first
building the backbone that will eventually connect the roots of the
pruned subtrees, and then specifying where these subtrees should be
grafted along the backbone.

\subsection{The details of the transformation}\label{sectransdetails}

Let $\mathcal{P}$ be a Poisson process on $\operatorname
{skel}(\mathcal{T}) \times
[0,\infty)$ with
intensity measure $\ell\otimes\ddd t$, and for each $t \geq
0$, let
\[
\mathcal{P}^{\circ}_t = \bigl\{x \in\mathcal{T}\dvtx\exists
s, 0 \le s \le t, (x,s) \in\mathcal{P} \bigr\}.
\]
In \cite{aldous1998standard}, Aldous and Pitman used the point process
$\mathcal{P}$ to construct (what is now called) a self-similar fragmentation
process on $\mathcal{T}$ \cite{Bertoin2006}. For each $t \geq0$, let
$\mathcal{F}
^{\circ}_t = \mathcal{T}\setminus\mathcal{P}^{\circ}_t$.
In particular, two points $u,v \in\mathcal{T}\setminus\mathcal
{P}^{\circ}_t$ are in
the same component of $\mathcal{F}^{\circ}_t$ precisely if, in
$\mathcal{T}$, the path
${\llbracket u,v \rrbracket}$ contains no element of $\mathcal
{P}^{\circ}_t$. Aldous and Pitman
\cite{aldous1998standard} established many beautiful facts
about how the collection of masses of the components of $\mathcal
{F}^{\circ}_t$
evolve with $t$; one basic fact from \cite{aldous1998standard}
is that a.s., for each $t>0$, $\mathcal{F}^{\circ}_t$ has only
countably many
components, and the total mass of all components of $\mathcal
{F}^{\circ}_t$ is
one. (This seems intuitively obvious, but note that it is a priori
possible that for every $t>0$, $\mathcal{F}^{\circ}_t$ contains uncountably
many components, each of mass zero; consider $[0,1]\setminus\mathbb Q$.)

\begin{db*}
For $t \ge0$, write
$\widetilde{\mathcal{T}}_{t}$ for the component of~$\mathcal{F}^{\circ
}_t$ containing the root
$\rho$ at time $t$; then define a process $(L(t),t \ge0)$ by setting
%
\begin{equation}
\label{eqdeflocaltime} L(t) = \int_0^t \mu(\widetilde{\mathcal{T}}_s) \,\ddd s.
\end{equation}
The process $L(t)$ is the continuum analogue of the ``number of cuts by
time $t$''; the process $(L(t), t \ge0)$ will code the distance along
the backbone in the continuum transformation; see Theorem~\ref
{teolocaltime} and Corollary~\ref{corlocaltime} below.

Theorem 6 of \cite{aldous1998standard} states that if we define an
increasing function $(X(t),t \geq0)$ by
%
\begin{equation}
\bigl( \mu(\widetilde{\mathcal{T}}_t), t \geq0 \bigr) =
\frac{1}{1+X(t)},
\end{equation}
then $X(\cdot)$ is a stable subordinator of index $1/2$, or in other words,
is distributed as the inverse local time process at zero of a standard
reflecting Brownian motion.
The function $X(\cdot)$ has almost sure quadratic growth, and it
follows that $L(\infty):= \lim_{t \to\infty} L(t)$ is almost
surely finite.
[The proof of Corollary~\ref{corlocaltime}, below, contains a
different proof that $L(\infty)$ is almost surely finite, using the
principle of accompanying laws.]
\end{db*}

\begin{aaa*}
Since $\mathcal{P}$ is a countable set, we may enumerate
its atoms
as $((p_i,\tau_i),i \in\mathbb N)$.
For $t \ge0$, let
\[
I_t = \bigl\{i \in\mathbb N\dvtx0 \le\tau_i \le t, \mu(
\widetilde{ \mathcal{T}}_{\tau_i}) < \mu(\widetilde{\mathcal{T}}_{\tau_i-})
\bigr\}
\]
and let
\[
\mathcal{P}_t = \{p_i\dvtx i \in I_t\}
\subseteq \mathcal{P}^{\circ}_t.
\]
Let $P_{\infty} = \bigcup_{t \ge0} \mathcal{P}_t$,
and let $I_{\infty}=\bigcup_{t\ge0} I_t$.
Next, for $0 \le t \le\infty$, let $\widetilde{\mathcal{F}}_t= \mathcal
{T}\setminus\mathcal{P}_t$,
let $\tilde{d}_t$ be its intrinsic distance: for points $x,y$ in the
same component of $\widetilde{\mathcal{F}}_t$, we have \mbox{$\tilde{d}_t(x,y)=d(x,y)$},
while for $x,y$ in distinct components of $\widetilde{\mathcal{F}}_t$, we have
\mbox{$\tilde{d}_t(x,y)=\infty$},\footnote{See \cite{burago01}, Sections~2.3
and 2.4, for the general definition of intrinsic distance for a subset
of a metric space.} and let $\tilde{\mu}_t$ be the restriction of $\mu$ to $\widetilde
{\mathcal{F}}_t$.
Then let $(\mathcal{F}_t,d_t)$ be the metric space completion of
$(\widetilde{\mathcal{F}
}_t,\tilde{d}_t)$, and let $\mu_t$ be\vspace*{1pt} the extension of $\tilde{\mu}_t$ obtained
by assigning measure zero to all points of
$\mathcal{F}_t\setminus\widetilde{\mathcal{F}}_t$; note that there are
only countably many
such points.\footnote{The assiduous reader may ask: the forest
$(\mathcal{F}
_t,d_t,\mu_t)$ is meant to be a random element of what (Polish) space?
One possible answer is to view this forest as given by some random
function $e_t\dvtx[0,1] \to[0,\infty)$ with $e_t(0)=e_t(1)=0$,
and with the ``components'' of the forest separated by the zeros of
$e_t$; this perspective is elaborated in Appendix~\ref{secapp}.
However, this forest itself is essentially introduced for expository
purposes and plays no role in the sequel; as such, the details
of how to formalize the definition of $(\mathcal{F}_t,d_t,\mu_t)$ are
unimportant in the remainder of the paper.}

Next, write $\mathcal{T}_t$ for the component of $\mathcal{F}_t$
containing $\rho$.
We then have that a.s. for all $t \ge0$, $\widetilde{\mathcal{T}}_t$ is a
connected component of $\widetilde{\mathcal{F}}_t$, and that 
a.s.%
\begin{equation}
\label{eqsamemasses} \bigl(\mu(\widetilde{\mathcal{T}}_t),t \ge0 \bigr) =
\bigl( \mu_t(\mathcal{T}_t), t \ge0 \bigr).
\end{equation}
By definition, a.s. for every $0 \le s < t$, every component of $\widetilde
{\mathcal{F}}_s$ not containing $\rho$ is also a component of $\widetilde
{\mathcal{F}}_t$.
This naturally extends to the completions $\mathcal{F}_s$ and
$\mathcal{F}_t$.\vspace*{1pt}

For $0 \le t \le\infty$, let $\tilde{\phi}_t$ be the identity map from
$\widetilde{\mathcal{F}}_t$ to $\mathcal{T}$,
and let $\phi_t$ be the unique extension of $\tilde{\phi}_t$ to
$\mathcal{F}_t$
whose restriction to any
component of $\mathcal{F}_t$ is a continuous function.
With probability one, for each $i \in I_\infty$, $p_i$ has degree two
in $\mathcal{T}$ and also in
$\mathcal{F}_{\tau_i-}$.
It follows that almost surely, for each $i \in I_{\infty}$, $\mathcal
{F}_{\tau
_i}\setminus\mathcal{F}_{\tau_i-}$ contains precisely two points.
Call these points $x_i$ and $y_i$, labeled so that $x_i \notin\mathcal
{T}_{\tau
_i}$ and $y_i \in\mathcal{T}_{\tau_i}$. Write $f_i$ for the
component of $\mathcal{F}
_{\tau_i}$ containing $x_i$.
\label{eqeffi}
Necessarily, $x_i \in f_i\setminus\widetilde{\mathcal{F}}_t$ and
$p_i=\phi_t(x_i)$
is the closest point of
$\phi_t(f_i)$ to $\rho$; in other words, $p_i$ is ``the root of the
subtree cut at time $\tau_i$.'' Also, $x_i$ and $y_i$ are both leaves
in $\mathcal{F}_{\tau_i}$.
For distinct points $p_i,p_j \in I_t$ the trees $f_i,f_j$ are disjoint,
so in particular $x_i \neq x_j$.

The space $(\mathcal{F}_{\infty},d_{\infty},\mu_{\infty})$ is the limiting
analogue of the forest $\mathbf F$ from Section~\ref{sectrefor}.
We note that $(\mathcal{T},\mu)$ can be recovered from $(\mathcal
{F}_{\infty},d_{\infty
},\mu_{\infty})$ by identifying $x_i$ and $y_i$ for each $p_i \in
I_{\infty}$, and taking as measure the corresponding push-forward of
$\mu_{\infty}$.

For $0 \le t \le\infty$, let $A_t$ be the real tree consisting of the
line segment $[0,L(t)]$ with the standard distance. Then form a
measured $\mathbb{R}$-tree $(\widehat{\mathcal{T}}_t,\hat{d}_t,\hat{\mu}_t)$
from $A_t$ and
$\mathcal{F}_t\setminus\mathcal{T}_t$,
by identifying $x_i \in f_i$ and $L(\tau_i) \in[0,L(t)]$, for each $i
\in I_t$, with measure $\hat{\mu}_{t}$ given by the push-forward of
$\mu
_t|_{\mathcal{F}_t\setminus\mathcal{T}_t}$. [We justify that $(\widehat{\mathcal{T}}_t,\hat{d}_t,\hat{\mu}_t)$ is indeed a well-defined random $\mathbb
{R}$-tree, using a
coding by excursions, in Appendix~\ref{secapp}.]
We naturally view these spaces as increasing in $t$. Write\vspace*{1pt} $\widehat
{\mathcal{T}
}=\widehat{\mathcal{T}}_{\infty}$, $\hat{d}=\hat{d}_{\infty}$, $\hat{\mu}=\hat{\mu
}_{\infty}$ and let $u=L(0)$ and $v=L(\infty)$. Almost surely both $u$
and $v$ are elements of $\widehat{\mathcal{T}}$.

The set of points of ${\llbracket u,v \rrbracket}$ of degree greater
than two in $(\widehat
{\mathcal{T}},\hat{d})$ are precisely the images in $\widehat{\mathcal{T}}$
of the points $\{x_i,i \in I_{\infty}\}$ in $\mathcal{F}_{\infty}$,
and if $x$ is the image of such a point $x_i$, then $\hat{d}(u,x)=L(\tau
_i)$. It follows that the set of times $\{\tau_i,i \in I_{\infty}\}$ is
measurable with respect to $(\widehat{\mathcal{T}},\hat{d}, \hat\mu)$.
Also, a.s.
$\{y_i, i \in I_{\infty}\}\cap\{x_i,i \in I_{\infty}\}=\varnothing
$, so
none of the points $\{y_i,i \in I_{\infty}\}$ are identified with other
points when forming $\widehat{\mathcal{T}}$. In other words, we may view
the points
$\{y_i,i \in I_{\infty}\}$ as points of $\widehat{\mathcal{T}}$ (rather
than as
\emph{members} of equivalence classes of points).

Now recall the definition of $(\mathcal{T},d,\mu,\rho)$ from the
start of the
section, and let $\rho'$ be a point of $\mathcal{T}$ selected
according to $\mu
$ and independent of $\rho$.
\end{aaa*}

%
\begin{teo}\label{teoexcbr}
It holds that $(\widehat{\mathcal{T}},\hat{d},\hat{\mu},(u,v))$ has
the same
distribution as $(\mathcal{T},d,\mu,(\rho,\rho'))$. Furthermore,
conditionally
on $(\widehat{\mathcal{T}},\hat{d},\hat{\mu},(u,v))$, the elements of
$\{y_i, i \in
I_{\infty}\}$ are mutually independent, and
for all $i \in I_{\infty}$, $y_i$ is distributed according to the
probability measure $\hat{\mu}|_{\widehat{\mathcal{T}}\setminus\widehat
{\mathcal{T}}_{\tau
_i}}/(1- \hat{\mu}(\widehat{\mathcal{T}}_{\tau_i}))$.
\end{teo}
We remark that Theorem~\ref{teoforwardtrans} is an immediate
consequence of the
first assertion of the theorem. Likewise,
Theorem~\ref{teoctsreverse} immediately follows from the definitions of
$(\widehat{\mathcal{T}},\hat{d},\hat{\mu},(u,v))$ and of the points $\{
y_i, i \in
I_{\infty}\}$ and from the second assertion of the theorem.

The remainder of Section~\ref{secnovel} is devoted to the proof of
Theorem~\ref{teoexcbr}.
The proof of Theorem~\ref{teoexcbr} relies on couplings with the
construction for uniform Cayley trees, and we introduce these couplings
in Section~\ref{seccouplings}. In Section~\ref{secconv-backbone}, we
show that the process $(L(t), t \ge0)$ is indeed the correct analogue
of ``number of cuts'' in the discrete setting. Finally, we wrap up the
proof of Theorem~\ref{teoexcbr} in Section~\ref{secproof-transform}.

\subsection{Some couplings between discrete and continuous
trees}\label{seccouplings}

The couplings we introduce in this section are not specific to the case
of uniform Cayley trees. This will be important in Section~\ref
{secsigma}, when we extend our results to other finite-variance
critical conditioned Galton--Watson trees.

Let $\xi=(\xi_i, i\ge0)$ be a critical finite-variance offspring
distribution, that is, a probability distribution on $\{0,1,\ldots\}$ with
\[
\sum_{i\ge0} i \xi_i =1 \quad\mbox{and}
\quad\sigma^2=\sum_{i\ge0} i(i-1)
\xi_i \in(0,\infty).
\]
In the following, we consider only values of $n$ such that a sum of $n$
i.i.d. random variables with distribution $\xi$ equals $n-1$ with
positive probability. For such $n\ge1$, let $T^n$ be a Galton--Watson
tree with offspring distribution $\xi$, conditioned to have\vadjust{\goodbreak} $n$ nodes.
For $x,y \in T^n$ let $d^{n}(x,y)$ be $\sigma n^{-1/2}$ times the graph
distance between $x$ and $y$ in $T^n$. Let $\rho^n$ denote the root of
$T^n$, let $\mu^n$ be the measure placing mass $1/n$ on each node of
$T^n$ and let $\ell^n$ be the measure placing mass $\sigma n^{-1/2}$ on
each vertex of $T^n$ (the ``discrete, rescaled length measure''). Let
next, $\mathcal{T}$ be the Brownian CRT with root $\rho$ and distance metric
$d$, let $\mu$ be its mass measure and let $\ell$ be the length measure
on the skeleton of $\mathcal{T}$. We will use the following
fundamental result heavily.

%
\begin{teo}[(Aldous \cite{Aldous1993a}, Le~Gall \cite
{legall06realtrees})]\label{teocayleygh}
It holds that
\[
\bigl(T^n, d^n,\mu^n,\rho^n
\bigr) \stackrel{d} {\to}(\mathcal{T},d,\mu,\rho)
\]
as $n \to\infty$, where convergence is in the 1-pointed
Gromov--Hausdorff--Prok\-horov sense.
\end{teo}
Strictly speaking, neither of the above papers establishes
Gromov--Haus\-dorff--Prokhorov convergence. However, deducing
Theorem~\ref
{teocayleygh} from the earlier results is essentially immediate; we
briefly sketch the line of the proof. First, by Proposition~10 of \cite
{miermont2009tessellations}, to prove Theorem~\ref{teocayleygh} it
suffices to establish convergence of $(T^n, d^n,\mu^n)$ to $(\mathcal
{T},d,\mu
)$ in the Gromov--Hausdorff--Prokhorov sense. Second, it is
straightforward to verify that Gromov--Hausdorff--Prok\-horov convergence
is equivalent to Gromov--Hausdorff convergence plus convergence of all
finite-dimensional marginals. The former convergence is established in
\cite{legall06realtrees}, and the latter is established in \cite{Aldous1993a}.
(See also Theorem~8 of Haas and Miermont \cite{HaMi2012a}, who
explicitly state Gromov--Hausdorff--Prokhorov
convergence as an application of their results on Markov branching trees.)

First, by Skorohod's representation theorem (see, e.g., \cite
{billingsley1968cpm}), we may consider a probability space $(\Omega,\mathcal{F},\mathbf{P})$ in which we have the almost sure GHP convergence
\[
\bigl(T^n,d^n,\mu^n,\rho^n
\bigr) \to(\mathcal{T},d,\mu,\rho).
\]
In such a space, we may find a sequence of correspondences $(R_n, n\ge
1)$ between $T^n$ and $\mathcal{T}$, such that $\operatorname
{dis}(R_n)\to0$ almost surely
as $n\to\infty$. We may also find a sequence of couplings $(\nu_n, n
\ge1)$ between $\mu^n$ and $\mu$ such that the defect \mbox{$D(\nu_n) \to0$}
almost surely as $n \to\infty$, and such that $\nu_n(R_n^c) \to0$
almost surely as $n \to\infty$.

Next, let $(s_i,i \geq1)$ be a random sequence of independent points
of $\mathcal{T}$ distributed according to $\mu$, and for each $n \in
\mathbb N$ let
$(s_{i}^n,i \geq1)$ be a sequence of independent points of $T^n$
distributed according to $\mu^n$. Also, write $s_0=\rho$ and
$s_{0}^n=\rho^n$ for notational convenience, and for $k \ge1$ write
$S^n_k = \{s_0^n,\ldots,s_k^n\}$. The almost sure GHP convergence
above implies
\cite{miermont2009tessellations}, Proposition 10, that for each fixed
$k \ge1$,
\[
\bigl(T^n, d^n,\mu^n, \bigl(s^n_0,
\ldots,s^n_k \bigr) \bigr) \stackrel{d} {\to} \bigl(
\mathcal{T},d, \mu,(s_0,\ldots,s_k) \bigr),
\]
in the sense of $d_{\mathrm{GHP}}^{k+1}$,
and Skorohod's theorem (applied once for each $k \ge1$) then implies
that we may work in a space in which almost surely, for all
$\varepsilon> 0$,
%
\begin{equation}
\label{eqskorohod2} \qquad\lim_{n \to\infty} \inf \bigl\{k\dvtx
d_{\mathrm{GHP}}^{k+1} \bigl( \bigl(T^n,\mu^n,
\bigl(s_{0}^n, \ldots,s_{k}^n
\bigr) \bigr), \bigl(\mathcal{T}, \mu,(s_{0},\ldots,s_{k})
\bigr) \bigr) \geq \varepsilon \bigr\} = \infty.
\end{equation}
For each $n,k \geq0$, recall that $T^n{\llbracket S^n_k \rrbracket}$
is the subtree of
$T^n$ spanned by $S^n_k$,
and let $\ell^n_k$ be the restriction of $\ell^n$ to $T^n{\llbracket
S^n_k \rrbracket}$.
Also, let $\mathcal{T}{\llbracket S_k \rrbracket}$ be the subtree of
$\mathcal{T}$ spanned by $S_k=\{
s_0,\ldots,s_k\}$, and let $\ell_k$ be the length measure on
$\mathcal{T}{\llbracket S_k \rrbracket}$.
In the space in which (\ref{eqskorohod2}) almost surely holds, we
immediately have
%
\begin{equation}
\label{eqskorohod3} \qquad\sup_{k \in\mathbb N} \lim_{n \to\infty}
d_{\mathrm{GHP}}^{k+1}\bigl( \bigl(T^n \bigl[\!\bigl[S^n_k\bigr]\!\bigr], \ell ^n_{k},
\bigl(s_{0}^n, \ldots,s_{k}^n
\bigr) \bigr), \bigl(\mathcal{T} {\llbracket S_k \rrbracket},
\ell_k,(s_0,\ldots,s_k) \bigr) \bigr) = 0.
\end{equation}
For each $n$ let $\mathcal{P}^n$ be a Poisson process on $T^n \times
[0,\infty
)$ with intensity measure \mbox{$\ell^n \otimes\ddd t$}. Then
$\mathcal{P}^n$ converges
in distribution to $\mathcal{P}$ in the sense of uniform convergence
on sets of
finite length measure \cite{DaVe2007}, Chapter~11.

Recall that we have enumerated the atoms of $\mathcal{P}$ as
$((p_i,\tau_i),i
\in\mathbb N)$; likewise, for each
$n \in\mathbb N$ we list the atoms of $\mathcal{P}^n$ as
$((p_{i}^n,\tau
_{i}^n),i \in
\mathbb N)$. We noted above that a.s. for each $i \in\mathbb N$, $p_i$
has degree
two in $\mathcal{T}$ and in $\mathcal{F}_{\tau_i-}$. Since $\mathcal
{T}$ is compact, yet
another application of Skorohod's theorem then implies that we may find
a space in which in addition to (\ref{eqskorohod2}) and (\ref
{eqskorohod3}), almost surely for each $\varepsilon> 0$ we have
%
\begin{equation}
\label{eqskorohod4} \lim_{n \to\infty} \inf \bigl\{i\dvtx\bigl|
\tau_{i}^n- \tau_i\bigr| > \varepsilon \bigr\} = \infty
\end{equation}
for each $k \geq0$ we have
%
\begin{eqnarray}\label{eqskorohod5}
\qquad && \lim_{n \to\infty} \inf \bigl\{i \in\mathbb N\dvtx \bigl|
\mathcal{T} {\llbracket S_k \rrbracket} \cap\{ p_{1},
\ldots,p_{i}\}\bigr| \neq\bigl|T^n \bigl[\!\bigl[S^n_k \bigr]\!\bigr]\cap \bigl\{p_{1}^n,\ldots,p_{i}^n
\bigr\}\bigr| \bigr\}
\nonumber\\[-8pt]\\[-8pt]
&&\qquad = \infty\nonumber
\end{eqnarray}
and for any fixed $k \geq0$, $i \geq1$, writing
\[
U_{k,i}^n= \bigl(s_{0}^n,
\ldots,s_{k}^n,p_{1}^n,
\ldots,p_{i}^n \bigr) \quad\mbox{and} \quad
U_{k,i}=(s_0,\ldots,s_k,p_1,
\ldots,p_{i}),
\]
we a.s. have
%
\begin{equation}
\label{eqskorohod6} d_{\mathrm{GHP}}^{k+1+i}\bigl( \bigl(T^n,d^n,
\mu ^n,U_{k,i}^n \bigr),(\mathcal{T},d,
\mu,U_{k,i}) \bigr) \to0
\end{equation}
as $n \to\infty$.

To sum up: by a sequence of applications of Skorohod's theorem we have
arrived at a space in which, after rescaling, the sequence $T^n$
converge almost surely to a Brownian CRT $\mathcal{T}$. We have additionally
coupled a sequence of random draws from the mass measure of $\mathcal
{T}$ to
its discrete counterpart, and a Poisson process on $\operatorname
{skel}(\mathcal
{T})\times
[0,\infty)$ to \emph{its} discrete counterpart, in such a way that any
finite collection of such points in the limiting space is arbitrarily
closely approximated by a corresponding (in both the informal and the
technical sense) collection of points in $T^n$, for $n$ large enough.
Furthermore, we have done so in such a manner that for any fixed $t>0$
and $k \geq1$, the operation of restricting the Poisson process to the
set of points arriving before time $t$ and falling within the subtree
spanned by the first $k$ random draws from the mass measure, commutes
with taking the large-$n$ limit.

\subsection{The convergence of the discrete backbones}\label{secconv-backbone}

In this section we continue to assume that $T^n$ is a conditioned
Galton--Watson tree with critical, finite-variance offspring
distribution $\xi$.
Before proving Theorem~\ref{teoexcbr}, we also need to express the
modified Aldous--Broder dynamics in the setting of conditioned
Galton--Watson trees. The only minor issue which needs to be addressed
is the fact that the modified Aldous--Broder dynamics should ignore
points of $\mathcal{P}^n$ which fall in an already cut subtree.

First, we consider the planted tree $T^n\langle\rho^n \rangle$ and
call the planted vertex $w^n$. We extend $\mu^n$ to $v(T^n\langle\rho
^n \rangle)$ by setting $\mu^n(\{w^n\})=0$. Recall the notation $a(v)$
for the parent of vertex $v$. For each $0 \le t \le\infty$, let
$T^n_t$ be the component of
\[
\bigl(v \bigl(T^n \bigl\langle\rho^n \bigr\rangle
\bigr), e \bigl(T^n \bigl\langle\rho^n \bigr\rangle \bigr)
\setminus \bigl\{ \bigl(p^n_i,a \bigl(p^n_i
\bigr) \bigr)\dvtx0 \le\tau_i^n \le t \bigr\} \bigr)
\]
containing $w^n$, and define $T^n_{t-}$ accordingly. (The forest in the
preceding equation is the finite-$n$ analogue of $\mathcal{F}^{\circ
}_t$, but
will not be used in what follows.) Write
\[
I^n_t = \bigl\{i \in\mathbb N\dvtx0 \le
\tau^n_i \le t, \mu^n \bigl(T^n_t
\bigr) < \mu ^n \bigl(T^n_{t-} \bigr) \bigr\}
\]
for the indices corresponding to ``effective'' cuts up to time $t$, and let
\[
\mathcal{P}^n_t = \bigl\{ p^n_i
\dvtx i \in I^n_t \bigr\}
\]
be the set of locations of these cuts. For $i \in I^n_{\infty}$ let
$x^n_i = p^n_i$ and let $y^n_i$ be the parent of $x^n_i$ in $T^n$ (here
we view $\rho^n$ as its own parent). Then, for $0 \le t \le\infty$, let
\[
\label{eqeffn} \mathcal{F}^n_t = \bigl(v
\bigl(T^n \bigr), e \bigl(T^n \bigr) \setminus \bigl
\{(x_i,y_i)\dvtx 0 \le\tau_i \le t \bigr\}
\bigr)
\]
and for $i \in I^n_{\infty}$ write $f^n_i$ for the component of
$\mathcal{F}
^n_{\infty}$ containing $x^n_i$. Note that $f^n_i$ is in fact a
component of $\mathcal{F}^n_t$ for all $\tau^n_i \le t \le\infty$.

Write $\kappa^n = |I^n_{\infty}|$, and write $\pi^n$ for the
permutation of $I^n_{\infty}$ that reorders the elements of
$I^n_{\infty
}$ in increasing order of the corresponding cut time, so that for $i,j
\in I^n_{\infty}$, $\pi^n(i) < \pi^n(j)$ if and only if $\tau^n_i <
\tau
^n_j$. Also, write
\[
u^n = x^n_{\pi^n(1)}\quad\mbox{and}\quad
v^n = x^n_{\pi^n(\kappa^n)}.
\]
Finally, let $\widehat{T}^n$ be the tree obtained from $\mathcal
{F}^n_{\infty}$ by
removing $w^n$, then adding the edges
\[
\bigl( x^n_{\pi^n(i+1)},x^n_{\pi^n(i)} \bigr)
\qquad1 \le i < \kappa^n.
\]
We view $\widehat{T}^n$ as rooted at $u^n$.

\begin{re*}
It is a standard fact that if $\xi$ is a
mean-one Poisson distribution (in fact, the mean does not matter), then
$T^n$ has the same distribution as the tree obtained from a uniform
Cayley tree on $[n]$ by removing the vertex labels. In this case,
Theorem~\ref{teokey} then implies that $\widehat{T}^n$ is distributed as a
uniform Cayley tree with labels removed, and $v^n$ is a uniformly
random element of~$v(\widehat{T}^n)$, independent of~$\widehat{T}^n$. This fact
will be used in the course of the proof of Theorem~\ref{teoexcbr} in
Section~\ref{secproof-transform}. However, it plays almost no role in
the current section. In particular, all results presented in this
section, with the exception of Corollary~\ref{corlocaltime}, are valid
for general critical, finite-variance conditioned Galton--Watson trees.
\end{re*}

%
\begin{lem}\label{lemmasslimit}
In the space where (\ref{eqskorohod2})--(\ref{eqskorohod6}) hold,
almost surely
\[
\bigl(\mu^n \bigl(T^n_t \bigr),t \geq0
\bigr) \to \bigl(\mu_t(\mathcal{T}_t),t \geq0 \bigr),
\]
in the sense of uniform convergence on compacts for the Skorohod $J_1$ topology.
\end{lem}
\begin{pf}
Write $\nu_k$ for the uniform measure on points $s_0,\ldots,s_k$. In
other words, given $\mathcal{T}$ and $s_0,\ldots,s_k$, $\nu_k$
assigns mass
$1/(k+1)$ to each of the points $s_0,\ldots,s_k$. Similarly, write
$\nu
_{k}^n$ for the uniform measure on $s_{0}^n,\ldots,s_{k}^n$.
By (\ref{eqskorohod6}), for any fixed $i,k \geq1$, almost surely
%
\begin{equation}
\label{eqmasslimit1} \lim_{n \to\infty}d_{\mathrm{GHP}}^{k+1+i}
\bigl( \bigl(T^n, d^n, \nu_k^n,U_{k,i}^n
\bigr),(\mathcal{T},d,\nu_k,U_{k,i}) \bigr) = 0.
\end{equation}
Also, by Theorem~8 of \cite{aldous1998standard}, for almost every
realization of $\mathcal{T}$,
%
\begin{equation}
\label{eqmasslimit2} \lim_{k \to\infty} \ddd_{\mathrm{P}}(
\nu_k,\mu)=0.
\end{equation}
(In fact, in \cite{aldous1998standard}, only almost sure weak
convergence is claimed, but the proof simply consists of an application
of the Glivenko--Cantelli theorem and is easily seen to yield
convergence with respect to $\ddd_{\mathrm{P}}$.)
Since for all $t \ge0$, $\mathcal{T}_t$ is a compact subspace of
$\mathcal{T}$, and
the $\mathcal{T}_t$ are decreasing in $t$, it follows that
%
\begin{equation}
\label{eqmasslimitk} \bigl(\nu_k(\mathcal{T}_t),t \ge0 \bigr)
\to \bigl( \mu_t(\mathcal{T}_t),t \ge0 \bigr)
\end{equation}
as $k \to\infty$.
Combining (\ref{eqmasslimit1}) with (\ref{eqskorohod4}) and (\ref
{eqskorohod5}), we obtain that for each $k \geq0$, almost surely
%
\begin{equation}
\label{eqmasslimit3} \bigl(\nu_k^n \bigl(T_t^n
\bigr),t \geq0 \bigr) \to \bigl(\nu_k(\mathcal{T}_t),t
\geq0 \bigr)
\end{equation}
as $n \to\infty$. Next, combining {(\ref{eqmasslimit2}) with (\ref
{eqskorohod6})}, we obtain that almost surely
\[
\lim_{k \to\infty} \lim_{n \to\infty} d_{\mathrm{GHP}}^{k+1+i}
\bigl( \bigl(T^n, d^n,\mu^n,U_{k,i}^n
\bigr),(\mathcal{T},d, \nu_k,U_{k,i}) \bigr) = 0,
\]
which together with (\ref{eqmasslimit1}) implies that almost surely
\[
\lim_{k \to\infty} \limsup_{n \to\infty}
\ddd_{\mathrm{P}} \bigl(\mu^n,\nu_k^n
\bigr)=0.
\]
In view of (\ref{eqmasslimitk}) and (\ref{eqmasslimit3}), this proves
the lemma.
\end{pf}
Next, for each $n\ge1$, reorder the elements of $\mathcal{P}^n$ as $\{
(p_{n,i},t_{n,i}),i \ge1\}$ so that $t_{n,i} < t_{n,i+1}$ for all $i
\ge1$.
We emphasize that here we consider all atoms of $\mathcal{P}^n$, not
only those
that correspond to ``effective cuts.''
%
\begin{lem}\label{lemmassapprox}
In the space where (\ref{eqskorohod2})--(\ref{eqskorohod6}) hold, a.s.
\[
\biggl( \frac{1} {\sigma\sqrt n}\sum_{\{j\dvtx t_{n,j} \le t\}}
\mu^n \bigl(T^n_{t_{n,j}} \bigr),t \geq0 \biggr) \to
\bigl( L(t), t \ge0\bigr),
\]
in the sense of uniform convergence on compacts for the uniform
distance, as \mbox{$n \to\infty$}.
\end{lem}
\begin{pf}
From Lemma~\ref{lemmasslimit} it is immediate that
\[
\biggl( \int_0^t \mu^n
\bigl(T_s^n \bigr)\,\ddd s,t \ge0 \biggr) \to
\biggl( \int_0^t \mu_s(
\mathcal{T}_{s}) \,\ddd s,t \ge0 \biggr)
\]
as $n \to\infty$. Also, $\ell^n(T^n) = \sigma\sqrt{n}$, so the set
$\{
\tau_i^n,i \in\mathbb N\}$
forms a Poisson point process of intensity $\sigma\sqrt{n}$ on
$[0,\infty)$, from which it follows straightforwardly that
\[
\biggl( \frac{1} {\sigma\sqrt n}\sum_{\{j\dvtx t_{n,j} \le t\}}
\mu^n \bigl(T^n_{t_{n,j}} \bigr),t \geq0 \biggr) \to
\biggl( \int_0^t \mu_s(\mathcal
{T}_s) \,\ddd s, t \ge0 \biggr)
\]
and the result then follows from (\ref{eqsamemasses}) and the
definition of $L(t)$ in (\ref{eqdeflocaltime}).
\end{pf}
Our next goal is to show that $(L(t),t \geq0)$ is the limit of the
discrete process which tracks the number of effective cuts up to time
$t\sqrt{n}$.
Write
\[
L^n(t) = \bigl|\mathcal{P}^n_t\bigr| = \# \bigl\{s
\leq t\dvtx \mu^n \bigl(T^n_{s} \bigr)<
\mu^n \bigl(T^n_{s-} \bigr) \bigr\}
\]
and note that, for every $n\ge1$, $L^n(t)$ increases to $\kappa^n(T^n)
= \#\{s> 0 \dvtx\mu^n(T^n_{s})< \mu^n(T^n_{s-})\}$, as $t\to\infty$.

%
\begin{teo}\label{teolocaltime}
In the space in which (\ref{eqskorohod2})--(\ref{eqskorohod6}) hold, a.s.
\[
\bigl(L^n(t)/ \bigl(\sigma n^{1/2} \bigr),t \ge0 \bigr) \to
\bigl(L(t),t \ge0 \bigr)
\]
in the sense of uniform convergence on compacts for the uniform
distance, as \mbox{$n \to\infty$}.
\end{teo}
In proving Theorem~\ref{teolocaltime} we will use a martingale
inequality from \cite{mcd}, Theorem~3.15.
Let $\{X_i\}_{i=0}^n$ be a bounded martingale with $X_0=0$, adapted to
a filtration $\{\mathcal{G}_i\}_{i=0}^n$.
Next let $V=\sum_{i=0}^{n-1} \mathbf{V}[X_{i+1}|\mathcal{G}_i]$, where
\[
\mathbf{V}[X_{i+1}|\mathcal{G}_i]:= {\mathbf E}
\bigl[(X_{i+1} - X_i)^2|\mathcal{G}_i
\bigr] = {\mathbf E}\bigl[X_{i+1}^2| \mathcal{G}_i
\bigr]-X_i^2
\]
is the predictable quadratic variation of $X_{i+1}$.
Define
\[
v=\operatorname{ess}\sup V \quad\mbox{and} \quad b = \max_{0 \leq i \leq n-1}
\operatorname{ess}\sup(X_{i+1}-X_i| \mathcal{G}_i),
\]
where for a random variable $X$, the essential supremum $\operatorname
{ess}\sup X$ is defined to equal $\sup\{x\dvtx{\mathbf P}(X \ge x) >
0\}$.
Then we have the following bound \cite{mcd}.
For any $t \geq0$,
%
\begin{equation}
\label{mcd} {\mathbf P}\Bigl(\max_{0 \leq i \leq n} X_i
\geq t\Bigr) \leq\exp \biggl( -\frac{t^2}{2v(1+bt/(3v))} \biggr).
\end{equation}
\begin{pf*}{Proof of Theorem~\ref{teolocaltime}}
In a first part, we prove uniform convergence on compacts for which we
do not need the trees $T^n$, $n\ge1$, to be uniform Cayley trees. Fix
$\delta> 0$ and $C > 0$.
By Lemma~\ref{lemmassapprox}, a.s.
\[
\sup_{0 \le t \le C} \biggl\llvert\frac{1} {\sigma\sqrt n}\sum
_{\{j\dvtx t_{n,j}
\le t\}} \mu^n \bigl(T^n_{t_{n,j-1}}
\bigr)- L(t) \biggr\rrvert\to0
\]
as $n \to\infty$. It follows that
%
\begin{eqnarray}
\label{eqlocaltime1} && {\mathbf P}\biggl(\limsup_{n \to\infty} \sup
_{0 \le t \le C} \biggl\llvert\frac{1} {\sigma\sqrt n}L^n(t)-L(t)
\biggr\rrvert> \delta\biggr)
\nonumber
\\[-8pt]
\\[-8pt]
&&\qquad\le{\mathbf P}\biggl(\limsup_{n \to\infty} \sup
_{0 \le t \le C} \biggl\llvert L^n(t)-\sum
_{\{j\dvtx t_{n,j} \le t\}} \mu^n \bigl(T^n_{t_{n,j-1}}
\bigr) \biggr\rrvert> \sigma\delta n^{1/2}\biggr).
\nonumber
\end{eqnarray}
Also, since $\mathcal{P}^n$ has intensity measure $\ell^n \otimes
\ddd t$ and
$\ell^n(v(T^n))=\sigma n^{1/2}$, we have that
\[
\lim_{x \to\infty} {\mathbf P}\Bigl(\liminf_{n \to\infty}
t_{n,\lfloor
x\sqrt{n}\rfloor} > C\Bigr)=1,
\]
which
implies that
the probability in (\ref{eqlocaltime1}) is at most
%
\begin{equation}
\lim_{x \to\infty} \label{eqlocaltime2} {\mathbf P}\biggl(\limsup
_{n \to
\infty} \max_{i \le x\sqrt{n}} \biggl\llvert
L^n(t_{n,i})- \sum_{1
\le j \le i}
\mu^n \bigl(T^n_{t_{n,j-1}} \bigr) \biggr\rrvert>
\sigma\delta n^{1/2}\biggr).
\end{equation}

For $i \geq1$, write
\begin{eqnarray*}
X_i & =& {\mathbf1}_{\{\mu^n  (T^n_{t_{n,i}}  ) < \mu^n
(T^n_{t_{n,i-1}}  )\}}.
\end{eqnarray*}
Also, for each $i \geq1$, let $\mathcal{P}^{n}_i = \{p_{n,1},\ldots,p_{n,i}\}$.
Taking $\mathcal{G}_{n,i}$ to be the sigma field generated by $T^n$
and $\mathcal{P}
^{n}_i$, then
$(X_i,i \geq1)$ is adapted to $(\mathcal{G}_{n,i},i \geq1)$.
Note that
\begin{eqnarray*}
{\mathbf E}[X_i|\mathcal{G}_{n,i-1}] & =& \mu^n
\bigl(T^n_{t_{n,i-1}} \bigr) = {\mathbf E}\bigl[X_i^2|
\mathcal{G}_{n,i-1}\bigr],
\end{eqnarray*}
so in all cases $\V{X_i|\mathcal{G}_{n,i-1}} \le1/4$.
Also, for all $i \geq1$ we have $\sum_{j=1}^i X_j = L^n(t_{n,i})$.
By (\ref{mcd}), for any fixed $x > 0$ and $n \ge1$, we thus have
%
\begin{eqnarray}
\label{eqxhatmart}
&& {\mathbf P}\biggl(\max_{i \le x\sqrt{n}} \biggl\llvert
\sum_{j\le i} X_j - \sum
_{j \leq i} \mu^n \bigl(T^n_{t_{n,j-1}}
\bigr) \biggr\rrvert\geq y\biggr)
\nonumber\\[-8pt]\\[-8pt]
&&\qquad \le2\exp\biggl( -\frac{2y^2}{x\sqrt {n}(1+4y/(3x\sqrt{n}))} \biggr).\nonumber
\end{eqnarray}
Applying this bound with $y=\delta\sqrt{n}$ and summing over $n$, it
follows by Borel--Cantelli that
\[
{\mathbf P}\biggl(\limsup_{n \to\infty} \max_{i \le x\sqrt{n}}
\biggl\llvert L^n(t_{n,i})-\sum
_{1 \le j \le i} \mu^n \bigl(T^n_{t_{n,j-1}}
\bigr) \biggr\rrvert> \delta n^{1/2}\biggr)=0,
\]
which together with (\ref{eqlocaltime2}) shows that $(L^n(t)/(\sigma
n^{1/2}),0 \le t \le C) \to(L(t), 0 \le t \le C)$ almost surely for the
uniform distance.
\end{pf*}
%
%
\begin{cor}\label{corlocaltime}
If $\xi$ is the $\operatorname{Poisson}(1)$ distribution then in the space in which
(\ref{eqskorohod2})--(\ref{eqskorohod6}) hold,
\[
\bigl(L^n(t)/n^{1/2},t \ge0 \bigr) \to \bigl(L(t),t \ge0
\bigr)
\]
in probability in the sense of uniform convergence on $[0,\infty)$.
\end{cor}
\begin{pf}
If $\xi$ is the $\operatorname{Poisson}(1)$ distribution, then $\sigma=1$. Uniform
convergence on compacts follows from Theorem~\ref{teolocaltime}.
Furthermore, as noted in the remark just before Lemma~\ref
{lemmasslimit}, in this case $T^n$ is distributed as a uniform Cayley
tree on $[n]$ with labels removed. Also, $\widehat{T}^n$ is again
distributed as a uniform Cayley tree with labels removed, and $\kappa
^n(T^n)$ is the distance between $u^n$ and $v^n$ in $\widehat{T}^n$, it
follows from Theorem~\ref{teokey} that $\kappa^n(T^n)/n^{1/2}$
converges in distribution to a Rayleigh random variable.

For any $t,\delta> 0$, given that $\mu^n(T^n_{t}) \le\delta$, the
difference $\kappa^n(T^n)-L^n(t)$ is dominated by the number of cuts
required to isolate the root of a uniform Cayley tree on $\lfloor
\delta n \rfloor$ vertices. It follows that for any $\varepsilon> 0$,
%
\begin{equation}
\label{eqforpal} \lim_{t \to\infty} \limsup_{n \to\infty} {
\mathbf P}\bigl(\kappa^n \bigl(T^n \bigr) -
L^n(t) > \varepsilon n^{1/2}\bigr) = 0.
\end{equation}
By the principle of accompanying laws (Theorem 9.1.13 of \cite
{stroock}), in the space in which (\ref{eqskorohod2})--(\ref
{eqskorohod6}) hold, we have
\[
\frac{\kappa^n(T^n)}{n^{1/2}} \stackrel{p}{\rightarrow} L(\infty)=\lim_{t\to\infty} L(t),
\]
which together with (\ref{eqforpal}) implies uniform convergence on
$[0,\infty)$. [This also yields a second proof that $L(\infty)$ is
almost surely finite, as promised just after (\ref{eqdeflocaltime}).]
\end{pf}
Before proving Theorem~\ref{teoexcbr} we note one consequence of
Corollary~\ref{corlocaltime}, stated
in the \hyperref[secintro]{Introduction} as Corollary~\ref{corrayleigh}. A different proof
of this result can be found in Abraham and Delmas \cite{AbDe2013b}.
%
\begin{pf*}{Proof of Corollary~\ref{corrayleigh}}
In proving Corollary~\ref{corlocaltime} we showed the existence of a
space in which
\[
L(\infty) \stackrel{p} {=} \lim_{n \to\infty}
\frac{\kappa
^n(T^n)}{n^{1/2}}
\]
and the latter limit is Rayleigh distributed by Theorem~\ref{teokey}
The lemma then follows from the definition of $L(t)$ in (\ref
{eqdeflocaltime}) and (\ref{eqsamemasses}).
\end{pf*}

\subsection{The proof of Theorem~\texorpdfstring{\protect\ref{teoexcbr}}{5.1}}\label
{secproof-transform}

In this section, in order to use the discrete results of Section~\ref
{seccayley}, we assume that $\xi$ is the $\operatorname{Poisson}(1)$ distribution,
or equivalently (see the remark just before Lemma~\ref{lemmasslimit})
that $T^n$ is a uniform Cayley tree on $[n]$ with its labels removed.
In particular, this implies that $\sigma=1$.

Recall the definitions of the trees $\{f_i,i \in I_{\infty}\}$ and $\{
f^n_i,i \in I^n_{\infty}\}$ from pages~\pageref{eqeffi} and~\pageref{eqeffn} (here we simply view each $f_i$ as a subset of $\mathcal
{T}$). Also,
write $\hat{d}_n$ for $n^{-1/2}$ times the standard graph distance on
$\widehat{T}^n$, and write $\hat{\mu}^n$ for the uniform probability
measure on $v(\widehat{T}^n)$.

We work in a space where (\ref{eqskorohod2})--(\ref{eqskorohod6}) all hold.
For any $\varepsilon> 0$, let
\[
J_{\varepsilon} = \bigl\{i \in I_{\infty}\dvtx\mu_{\infty}(f_i)
> \varepsilon \bigr\}.
\]
The set $J_{\varepsilon}$ is necessarily finite (it has size at most
$\varepsilon
^{-1}$), so $K^{(\varepsilon)}:= \sup\{i\dvtx i \in J_{\varepsilon}\}$ is
a.s. finite. By
(\ref{eqskorohod5}), for all $n$ sufficiently large we in particular
have that $J_{\varepsilon} \subset I^n_{\infty}$, and we hereafter
assume that
inclusion indeed holds.

Let $S = \{u,v\} \cup\bigcup_{i \in J_{\varepsilon}}f_i$, and let $\widehat
{T}_{\varepsilon} = \bigcup_{x,y \in S} {\llbracket x,y \rrbracket}$.
In words, $\widehat{T}_{\varepsilon}$ is the minimal subtree of $\widehat{T}$ which
contains each of the subtrees $f_i$, $i \in J_{\varepsilon}$ and also contains
the distinguished nodes $u$ and $v$. Likewise,
let
\[
\widehat{T}^n_{\varepsilon} = \widehat{T}^n \biggl[\!\!\biggl[ \bigl
\{u^n,v^n \bigr\} \cup\bigcup
_{i
\in J_{\varepsilon}} v \bigl(f^n_i \bigr)
\biggr]\!\!\biggr].
\]
We let $\hat{d}_{\varepsilon} = \hat{d}|_{\widehat{T}_{\varepsilon}}$, and
define $\hat{\mu}_{\varepsilon},\hat{d}^n_{\varepsilon},\hat{\mu}^n_{\varepsilon}$
accordingly.

The set $I_{\infty}$ is countable and $J_{\varepsilon} \uparrow
I_{\infty}$ as
$\varepsilon\downarrow0$. Also, it follows from the result of Aldous and
Pitman \cite{aldous1998standard} mentioned earlier that $\sum_{i \in
I_{\infty}} \mu_{\infty}(f_i)=1$ a.s., and we thus a.s. have
\[
\lim_{\varepsilon\downarrow0} \sum_{i \notin J_{\varepsilon}}
\mu_{\infty}(f_i) = 0.
\]
Since $\mathcal{T}$ is compact and each $f_i$ can be viewed as a
subtree of $\mathcal{T}
$, we must also a.s. have
\[
\lim_{\varepsilon\downarrow0} \sup_{i \notin J_{\varepsilon}}
\operatorname{diam}(f_i) = 0.
\]
(Otherwise, there would exist $\delta> 0$ and an infinite set $S
\subset I_{\infty}$ such that
for each $i \in S$, $f_i$ has height greater than $\delta$. For $i \in
S$, letting $q_i$ be any point in $f_i$
whose distance to the root $p_i$ of $f_i$ is at least $\delta$, the set
$\{q_i,i \in S\}$ is infinite and its elements have pairwise distance
at least $\delta$, contradicting compactness.)
By these facts and by (\ref{eqskorohod6}), for any $\delta> 0$ there
is $N=N(\varepsilon,\delta)$ which is almost surely finite, such that
for all
$n \ge N$ and $i\in J_\varepsilon$,
%
\begin{equation}
\label{eqfinal0} d_{\mathrm{GHP}}^{k+1+i}\bigl( \bigl(T^n,d^n,
\mu ^n,U_{k,i}^n \bigr),(\mathcal{T},d,
\mu,U_{k,i}) \bigr) < \delta
\end{equation}
and additionally $\sum_{i \notin J_{\varepsilon}} \mu_{\infty}(f_i) <
\delta$
and $\sup_{i \notin J_{\varepsilon}} \operatorname{diam}(f_i) < \delta
$. We fix a
correspondence $C \in\mathscr{C}((T^n,d^n,\mu^n,U_{k,i}^n),(\mathcal
{T},d,\mu,U_{k,i}))$ with $\operatorname{dis}(C) < 2\delta$ and containing the
appropriate
pairs of points from $U_{k,i}^n$ and $U_{k,i}$.
It follows from the fact that $\sup_{i \notin J_{\varepsilon}}
\operatorname{diam}(f_i) <
\delta$ that
%
\begin{equation}
\label{eqfinal1} d_{\mathrm{GHP}}^2 \bigl( \bigl(\widehat{T},\hat{d},
\hat{\mu},(u,v) \bigr), \bigl(\widehat{T}_{\varepsilon},\hat{d}_{\varepsilon},\hat{\mu}_{\varepsilon},(u,v) \bigr) \bigr) < \delta
\end{equation}
and that
%
\begin{equation}
\label{eqfinal2} \sup_{i \in I^n_{\infty}\setminus J_{\varepsilon} } n^{-1/2}
\operatorname{diam}\bigl(f^n\bigr) < 3 \delta.
\end{equation}
Next, write $m_{\delta} = \sup_{x \in\mathcal{T}} \mu(B(x,\delta
))$, where
$B(x,\delta)$ is the ball of radius $\delta$ around $x$ in $\mathcal
{T}$. We
have $m_{\delta} \downarrow0$ a.s. as $\delta\to0$. Choose $0 <
\delta< \varepsilon^2$ small enough that $m_{4\delta} < \varepsilon^2$.
Then for $n \ge N(\varepsilon,\delta)$, and for all $i \in J_{\varepsilon
}$, by
considering the $\delta$ blow-up $C_{\delta}$ of the correspondence
$C$, we see that
%
\begin{equation}
\label{eqfinal3} d_{\mathrm{GHP}}^{1}\bigl( \bigl(f^n_i,d^n_{\infty
}|_{f^n_i},
\mu^n_{\infty}|_{f^n_i} \bigr),(f_i,d_{\infty}|_{f_i},
\mu_{\infty}|_{f_i}) \bigr) < 2\delta+m_{4\delta} < 2
\varepsilon^2.
\end{equation}
In particular, for each $i \in J_{\varepsilon}$, $|\mu^n_{\infty
}(f^n_i) - \mu
_{\infty}(f_i)| < 2\varepsilon^2$, so
%
\begin{equation}
\label{eqfinal4} \sum_{i \in J_{\varepsilon}} \bigl\llvert
\mu^n_\infty \bigl(f^n_i \bigr) -{
\mu_\infty}(f_i) \bigr\rrvert< 2\varepsilon^2
|J_\varepsilon| < 2 \varepsilon
\end{equation}
and
%
\begin{equation}
\label{eqfinal5} \sum_{i \in I^n_{\infty}\setminus J_{\varepsilon}} {\mu^n_\infty}
\bigl(f^n_i \bigr) \le2\varepsilon+ \delta< 3\varepsilon.
\end{equation}
By (\ref{eqfinal2}) and (\ref{eqfinal5}), it follows that for all $n$
sufficiently large,
\[
d_{\mathrm{GHP}}^2 \bigl( \bigl(\widehat{T}^n,
\hat{d}^n, \hat{\mu}^n, \bigl(u^n,v^n
\bigr) \bigr), \bigl( \widehat{T}^n_{\varepsilon},\hat{d}^n_{\varepsilon},
\hat{\mu}^n_{\varepsilon}, \bigl(u^n,v^n
\bigr) \bigr) \bigr) < 3(\delta+ \varepsilon) < 6 \varepsilon.
\]
For each $i \in I_{\infty}$, $L(\tau_i) = \hat{d}(u,x_i)$ and for each
$i \in I^n_{\infty}$, $n^{-1/2}L^n(\tau_i^n) = \hat{d}(u^n,x^n_i)$. By\vspace*{2pt}
Corollary~\ref{corlocaltime}, it follows that for all $i \in
J_{\varepsilon
}$, for all $n$ sufficiently large, $|\hat{d}(u,x_i) - \hat{d}^n(u^n,x_i^n)| < \delta$.
Together with (\ref{eqfinal3}) and (\ref{eqfinal4}), this implies that
\[
d_{\mathrm{GHP}}^2 \bigl( \bigl(\widehat{T}^n_{\varepsilon},
\hat{d}^n_{\varepsilon},\hat{\mu}^n_{\varepsilon},
\bigl(u^n,v^n \bigr) \bigr), \bigl(\widehat{T}_{\varepsilon},
\hat{d}_{\varepsilon},\hat{\mu}_{\varepsilon},(u,v) \bigr) \bigr) < \max \bigl(
\delta+2\varepsilon^2,2\varepsilon \bigr) < 3\varepsilon.
\]
By the two preceding inequalities, (\ref{eqfinal1}) and the triangle
inequality, we obtain that a.s. for all $n$ sufficiently large,
\[
d_{\mathrm{GHP}}^2 \bigl( \bigl(\widehat{T}^n,
\hat{d}^n, \hat{\mu}^n, \bigl(u^n,v^n
\bigr) \bigr), \bigl( \widehat{T},\hat{d},\hat{\mu},(u,v) \bigr) \bigr) < 9\varepsilon +
\delta< 10\varepsilon.
\]
Since $\varepsilon> 0$ was arbitrary, the first assertion of the theorem then
follows from Theorem~\ref{teokey}.

Finally, since the distribution of the collection $\{y_i,i \in
I_{\infty
}\}$ is determined by its finite-dimensional distributions, the
assertion in the statement of Theorem~\ref{teoexcbr} about the
collection $\{y_i,i \in I_{\infty}\}$ then follows from Lemma~\ref
{lemcutonk}, below, whose straightforward proof is omitted.

%
\begin{lem}\label{lemcutonk}
Fix $n \ge1$, $k \ge1$, let $K = \{i \in I^n_{\infty}\dvtx p^n_i
\in
T^n{\llbracket S^n_k \rrbracket}\}$ and let $j \in K$ be the element
$i \in K$ which
minimizes $\tau^n_{i}$. Suppose that $T^n$ is a uniform Cayley tree on
[n]. Then for any $S \subset v(T^n)$, any tree $t$ with $v(t)=S$, and
any $y \in S$,
\[
{\mathbf P}\bigl(T^n_{\tau^n_{j}}=t\mbox{ and }
y^n_{j}=y | v \bigl(T^n_{\tau^n_{j}}
\bigr) = S\bigr) = |S|^{-|S|}.
\]
\end{lem}

\section{Conditioned Galton--Watson trees with finite variance}\label
{secsigma}

We now want to prove that the picture that we have obtained for the
process in the case of uniform Cayley trees is also valid when one
considers conditioned Galton--Watson trees with critical, finite-variance offspring distribution.
Fix an offspring distribution $\xi=(\xi_0,\xi_1,\ldots)$ with
\[
\sum_{i\ge0} i\xi_i=1\quad\mbox{and}\quad\sum_{i\ge0} i(i-1) \xi_i=
\sigma^2 \in(0,\infty).
\]

%
\begin{teo}\label{teogwcut} Let $T^n$ be distributed as a
Galton--Watson tree with offspring distribution $\xi$, conditioned to
have $n$ vertices. Then after rescaling, the number of cuts $\kappa
(T^n)$ required to isolate the root of $T^n$ is asymptotically Rayleigh
distributed,
\[
\lim_{n\to\infty} {\mathbf P}\bigl(\kappa \bigl(T^n
\bigr)\ge\sigma x \sqrt n\bigr) = e^{-x^2/2}.
\]
\end{teo}

Under a finite-variance assumption, Galton--Watson trees conditioned on
their size have the same scaling limit as uniform Cayley trees, so when
looking at a $(n,\sqrt n)$ rescaling for time and space, the cutting
process will essentially look the same. Completing the argument then
boils down to showing that once the left-over tree has size $o(n)$ the
number of cuts needed to completely destroy it is $o(\sqrt n)$. The
following lemma shows that this is indeed the case. (Although the
factor $\varepsilon^{1/6}$ is certainly not best possible, it is
sufficient for our needs.)

%
\begin{lem}\label{lemtightnessgw}
Suppose that ${\mathbf E}\xi=1$ and $\V\xi
=\sigma^2 \in(0,\infty)$. Let $T^n$ be a Galton--Watson tree with
progeny distribution $\xi$, conditioned on having size $n$. Let also
$\tau^n(\varepsilon)=\inf\{t\dvtx\mu^n(T^n_t)<\varepsilon\}$. Then
\[
\limsup_{n\to\infty}{\mathbf P}\bigl(\kappa \bigl(T^n_{\tau
^n(\varepsilon)}
\bigr)\ge\varepsilon^{1/6} \sqrt n\bigr) \mathop{\rightarrow}_{\varepsilon\to
0} 0.
\]
\end{lem}
\begin{pf}
Recall that for a rooted tree $T$ and a node $v$ of $T$, we write
$h_T(v)$ for the \emph{height} of $v$ in $T$, which is the number of
edges on the path from the root to $v$. We also write $h(T)= \max_{v
\in v(T)} h_T(v)$, and call $h(T)$ the height of $T$. Finally,
for $i \ge0$ write $w_i(T) = \#\{v \in v(T)\dvtx h_T(v)=i\}$.

For any $x,y>0$ we have
\begin{eqnarray}
\label{equseheight} {\mathbf P}\bigl(\kappa \bigl(T^n_{\tau^n(\varepsilon
)}
\bigr) \ge y \sqrt n\bigr) &\le& {\mathbf P}\bigl(\kappa \bigl(T^n_{\tau
^n(\varepsilon)}
\bigr) \ge y \sqrt n,h \bigl(T_{\tau
^n(\varepsilon)}^n \bigr) \le x\sqrt n
\bigr)
\nonumber
\nonumber
\\[-8pt]
\\[-8pt]
&&{} + {\mathbf P}\bigl(h \bigl(T_{\tau^n(\varepsilon)}^n \bigr)> x \sqrt
n\bigr).
\nonumber
\end{eqnarray}

The first term above is easily bounded using Markov's inequality. We
use Janson's representation of the number of cuts as records in the
tree \cite{Janson2004,Janson2006c}. Given a tree $t$, rooted at $r$,
one can assign extra labels to the vertices using a random permutation
of $\{1,2,\ldots, |t|\}$. This random permutation determines the order
in which the vertices are considered for cutting. In this
representation, a vertex $u$ will actually produce a cut if and only if
the path $\llbracket r,u\rrbracket$ between $u$ and the root has not
been previously cut. This happens precisely if $u$ has the minimum
label of all vertices on $\llbracket r,u\rrbracket$. In particular,
conditional on the height $h_t(u)$ of $u$ in $t$, the probability that
a vertex $u$ produces a cut is $(h_t(u)+1)^{-1}$. It follows that
%
\begin{eqnarray} \label{eqcgw1st}
&&{\mathbf P}\bigl(\kappa \bigl(T^n_{\tau^n(\varepsilon)} \bigr) \ge y
\sqrt n,h \bigl(T_{\tau^n(\varepsilon)}^n \bigr) \le x\sqrt n\bigr)
\nonumber
\\
&&\qquad \le\frac{1}{y\sqrt n} \cdot{\mathbf E}\bigl[\kappa \bigl(T^n_{\tau
^n(\varepsilon)}
\bigr) {\mathbf1}_{\{h  (T_{\tau
^n(\varepsilon)}^n  ) \le x\sqrt n\}}\bigr]
\nonumber
\\
&&\qquad \le\frac{1} {y \sqrt n} \cdot{\mathbf E}\biggl[\sum
_{u\in T^n_{\tau
^n(\varepsilon)}} \frac{1} {1+h_{T^n}(u)} {\mathbf1}_{\{h
(T_{\tau^n(\varepsilon)}^n  ) \le x\sqrt n\}}\biggr]
\nonumber\\[-8pt]\\[-8pt]
&&\qquad \le\frac{1} {y \sqrt n} \cdot{\mathbf E}\biggl[\sum
_{0 \le i \le x
\sqrt{n}} \sum_{\{u\dvtx h_{T^n}(u)=i\}}
\frac{1} {1+h_{T^n}(u)} {\mathbf1}_{\{h  (T_{\tau^n(\varepsilon)}^n  ) \le x\sqrt
n\}}\biggr]
\nonumber
\\
&&\qquad \le\frac{1} {y \sqrt n} \cdot\sum_{0\le i \le x \sqrt n}
\frac{{\mathbf E}[w_i(T^n)]}{1+i}
\nonumber
\\
&&\qquad \le\frac{1} {y \sqrt n} \cdot\sum_{0\le i \le x \sqrt n}
\frac{C
i}{1+i} \le\frac{C x} y,\nonumber
\end{eqnarray}
we used the fact that ${\mathbf E}[w_k(T^n)]\le C k$ uniformly in $k\ge
0$ and
$n\ge0$ (see Devroye and Janson \cite{DeJa2011a}) to obtain the
second-to-last inequality.

To bound the second term, we relate the finite-$n$ trees $T^n$ to their
limit $\mathcal{T}$.
We work in a space in which (\ref{eqskorohod2})--(\ref{eqskorohod6})
all hold, and recall from Section~\ref{seccouplings} the definitions
of the collections of points $(s_i,i \ge1)$ and $\{p_i^n\dvtx i \in
\mathbb N\}$,
and of their finite-$n$ counterparts $(s^n_i, i \ge1)$ and $\{
p_i^n\dvtx
i\in\mathbb N\}$.
In particular, recall the definitions of the sequences $S_k$, $S^n_k$,
from page~\pageref{eqskorohod3}.

We now use that for all $\delta> 0$,
\[
\lim_{k \to\infty}{\mathbf P}\bigl(d_{\mathrm{GH}}^1\bigl((
\mathcal {T},d,\rho),(\mathcal{T} {\llbracket S_k
\rrbracket},d|_{\mathcal
{T}{\llbracket S_k \rrbracket}},\rho) > \delta\bigr)\bigr) = 0.
\]
By (\ref{eqskorohod2}), we then also have that
\[
\lim_{k \to\infty} \limsup_{n \to\infty} {\mathbf
P}\bigl(d_{\mathrm
{GH}}\bigl( \bigl(T^n,d^n,
\rho^n \bigr), \bigl(T^n \bigl[\!\bigl[ S_k^n\bigr]\!\bigr],d^n|_{T^n{\llbracket S_k^n \rrbracket}}, \rho^n \bigr) > \delta
\sqrt n\bigr)\bigr) = 0.
\]
Equations (\ref{eqskorohod4}), (\ref{eqskorohod5}) and (\ref
{eqskorohod6}) provide a coupling of the cuts falling on $T^n
{\llbracket S_k^n \rrbracket}$
with those falling on $\mathcal{T}{\llbracket S_k \rrbracket}$ so
that for any fixed $t > 0$ and
for all sufficiently large $n$, the cuts falling within
$T^n {\llbracket S_k^n \rrbracket}$ and within $\mathcal
{T}{\llbracket S_k \rrbracket}$ occur at essentially the
same times and at essentially the same locations.
[This is precisely formalized by (\ref{eqskorohod4}), (\ref
{eqskorohod5}) and (\ref{eqskorohod6}).]
It then follows that in this space, for any $\varepsilon> 0$ and $\delta
> 0$,
%
\[
\limsup_{n \to\infty} {\mathbf P}\bigl(d_{\mathrm{GH}}^1
\bigl( \bigl(T^n_{\tau^n(\varepsilon)}, \sigma n^{-1/2}d^n|_{T^n_{\tau^n(\varepsilon
)}},
\rho^n \bigr),(\mathcal{T}_{\tau(\varepsilon)},d|_{\mathcal{T}
_{\tau(\varepsilon)}},\rho)
\bigr) > \delta\bigr) =0.
\]
Taking $\delta= x\sqrt{\varepsilon}$, from this we immediately obtain that
%
\begin{eqnarray}
\label{eqcgw2d} \limsup_{n \to\infty} {\mathbf P}\biggl(h
\bigl(T^n_{\tau^n(\varepsilon
)} \bigr) \ge\frac{ x}\sigma\sqrt{
\varepsilon n} \biggr) &\le&{\mathbf P}\bigl(h( \mathcal{T}_{\tau(\varepsilon)})\ge x
\sqrt\varepsilon\bigr)
\nonumber\\[-8pt]\\[-8pt]
& \le& e^{-\alpha x^2}\nonumber
\end{eqnarray}
for some constant $\alpha>0$.
The last inequality holds since: conditional on its mass, $\mathcal
{T}_{\tau
(\varepsilon)}$ is a Brownian CRT (see \cite{aldous1998standard}, equation
(44)); we have $\mu(\mathcal{T}_{\tau(\varepsilon)})\le\varepsilon
$; the height of a
Brownian CRT is distributed as the supremum of a Brownian excursion;
and the supremum of a Brownian excursion has Gaussian tails~\cite{Kennedy1976}.

Then, choosing, for instance, $x=\varepsilon^{1/3}$ {in (\ref
{equseheight})} and
$y=\varepsilon^{1/6}$ and using the bounds in (\ref{eqcgw1st}) and
(\ref{eqcgw2d}) to bound
(\ref{equseheight}) proves the result.
\end{pf}

Putting together Corollary~\ref{corrayleigh} and the following lemma
then yields Theorem~\ref{teogwcut}.

%
\begin{lem}\label{lemgw}
Let $T^n$ be a Galton--Watson tree with
offspring distribution $\xi$ conditioned to have size $n$, and let
$\mathcal T$ be a Brownian CRT. If ${\mathbf E}\xi=1$ and
$\operatorname{Var}(\xi
)=\sigma^2\in
(0,\infty)$ then
\[
\frac{\kappa(T^n)}{\sigma\sqrt n}\mathop{\rightarrow}_{n\to\infty}^{d}\int_{0}^\infty
\mu(\mathcal T_t) \,dt.
\]
\end{lem}
\begin{pf}
Write
$T^n_t$ for the subtree containing the root at time $t$ of the cutting
process, and
as in Section~\ref{secconv-backbone} write
\[
L^n(t)=\# \bigl\{s\le t\dvtx\mu^n \bigl(T^n_t
\bigr)<\mu^n \bigl(T^n_{t-} \bigr) \bigr\}
\]
for the number of cuts occurring before time $t$, Theorem~\ref
{teolocaltime} implies that for any fixed $t\in[0,\infty)$
%
\begin{equation}
\label{eqlimln} \frac{L^n(t)}{\sigma\sqrt n} \stackrel{d} {\to}\int_0^t
\mu(\mathcal T_{t}) \,\ddd t
\end{equation}
as $n \to\infty$.\vadjust{\goodbreak}

Recall that $\tau^n(\varepsilon)=\inf\{t\dvtx\mu
^n(T^n_t)<\varepsilon\}$. Since
$\tau^n(\varepsilon)<\infty$ almost surely and additionally $\tau
^n(\varepsilon)\to
\tau(\varepsilon)$ in distribution jointly with the convergence in
(\ref{eqlimln}),
we have
\[
\frac{L^n(\tau^n(\varepsilon))}{\sigma\sqrt n}\mathop{\rightarrow}^{d} \int_0^{\tau
(\varepsilon)}
\mu(s)\,ds.
\]
On the other hand,
\[
\frac{\kappa(T^n)-L^n(\tau^n(\varepsilon))}{\sqrt n} \le\frac
{\kappa
(T^n_{\tau^n(\varepsilon)})}{\sqrt n} \mathop{\rightarrow}_{\varepsilon\to 0}0,
\]
in probability, uniformly for all $n$ sufficiently large, by Lemma~\ref
{lemtightnessgw}.
Since \mbox{$\tau(\varepsilon)\to\infty$} almost surely as $\varepsilon
\to0$, it
follows that
\[
\frac{\kappa(T^n)}{\sigma\sqrt n} \stackrel{d} {\to}\int_0^\infty
\mu( \mathcal T_{t})\,\ddd t
\]
as $n \to\infty$, as claimed.
\end{pf}

\begin{appendix}
\section{Corollary~\texorpdfstring{\lowercase{\protect\ref{corbert}}}{1.2}: Proof sketch and discussion}\label{appendixcor}
Fix a rooted tree $t$ with nodes $\{1,\ldots,n\}$, and a sequence
$(v_1,\ldots,v_k) \in\{1,\ldots,n\}^k$ of nodes of $t$. In $t$, view
the children of a node as ordered so that node labels increase from
left to right. Let $t'$ be the subtree of $t$ spanned by the root and
$v_1,\ldots,v_k$. Let the \emph{reduced} tree $t^*$ be obtained from
$t'$ by suppressing degree-two vertices (so in $t^*$, the parent of
$v_i$ corresponds to the most recent common ancestor of $v_i$ and any
of the $v_j$ with $v_j \ne v_i$) and suppressing vertex labels (but
keeping the plane tree structure). Since $t^*$ has no nodes of degree
2, it has at most $2k-1$ edges, with equality precisely if it is binary
and $v_1,\ldots,v_k$ are distinct. Write $e$ for the number of edges
of $t^*$.

Given the tree $t^*$, one may recover $t$ by listing an ordered rooted
forest $f_1,\ldots,f_m$, together with a weak composition $(c_1,\ldots,c_e)$ of $m$ into $e$ parts. To do so, list the edges of $t^*$
according to their order of first traversal by a contour exploration of
$t^*$. Then glue the roots of $f_1,\ldots,f_{c_1}$ along the first
edge, $f_{c_1+1},\ldots,f_{c_1+c_2}$ along the second edge, and so on.
A result of Riordan \cite{MR0228386} states that the number of ordered
rooted forests on vertices $\{1,\ldots,n\}$ with $m$ components is
\[
B_{n,m}:= m!\pmatrix{n-1 \cr m-1} n^{n-m}.
\]
It follows that the number of trees $t$ with reduced tree $t^*$ and
such that $t'$ has $m$ vertices, is
\[
A_{k} \bigl(t^* \bigr) B_{n,m} \pmatrix{m + e - 1
\cr e -
1},
\]
where ${m+e-1\choose e-1}$ is the number of weak compositions of $m$
into $e$ parts,
and $A_k(t^*)$ is a combinatorial factor counting the possible
locations of $v_1,\ldots,v_k$ in $t^*$.
More precisely, $A_k(t^*)$ is the number of multi-sets of vertices from
$t^*$ of size $k$ (with multiplicity) containing all leaves of $t^*$.
In particular, if $t^*$ has $k$ leaves then $A_k(t^*)=1$. Since the
total number of $k$-marked rooted trees on $[n]$ is $n^{n+1-k}$, and
the number of binary plane trees with $k$ leaves is given by the
$(k-1)$'st Catalan number, straightforward approximations then prove
Corollary~\ref{corbert}.

It seems worthwhile to further observe that for any $p \in[1,\infty)$,
the collection of laws of the random variables
$((n^{-1/2}M(T_n,S_k))^p,n \ge1)$ forms a uniformly integrable family.
To see this, using the notation of Theorem~\ref{teokcoup}, let $E_n$
be number of edges in the subtree of $T_n$ spanned by its root
$r=r(T_n)$ plus $V_1,\ldots,V_k$. By Theorem~\ref{teokcoup},
\[
M(T_n,S_k) \stackrel{d} {=}E_n + k.
\]
Furthermore, writing $d_n$ for graph distance in $T_n$, we have
\begin{eqnarray*}
E_n+k & \le& \sum_{i=1}^k
d_n(r,V_i) + k. 
\end{eqnarray*}
Since $(d_n(r,V_i),1 \le i \le k)$ are i.i.d. it follows by a union
bound that
for $x> 0$,
%
\begin{eqnarray}
\label{eqlpbound0} {\mathbf P}\bigl( \bigl(n^{-1/2} M(T_n,S_k)
\bigr)^p \ge x\bigr) & \le& k {\mathbf P}\bigl(d_n(r,V_1)
\ge n^{1/2}x^{1/p}/k - 1\bigr).
\end{eqnarray}
But the law of $d_n(r,V_1)$ is well-known (see \cite{MR0263685} for an
early derivation): we have, for $\ell\ge1$,
%
\begin{equation}
\label{eqdndistr} {\mathbf P}\bigl(d_n(r,V_1) \ge\ell
\bigr) = \prod_{j=1}^{\ell-1} \biggl(1-
\frac{j}{n} \biggr) \le \exp \biggl(-\frac{\ell(\ell-1)}{2n} \biggr).
\end{equation}
Using (\ref{eqlpbound0}) and (\ref{eqdndistr}), standard
manipulations imply that for all $p\ge1$,
%
\begin{eqnarray*}
&& \lim_{K \to\infty} \sup_{n\ge1} {\mathbf E}\bigl[
\bigl(n^{-1/2} M(T_n,S_k) \bigr)^p {
\mathbf1}_{\{ (n^{-1/2} M(T_n,S_k)
)^p \ge K\}}\bigr]
\\
&&\qquad= \lim_{K \to\infty}\sup_{n\ge1} \sum
_{\ell\ge K} {\mathbf P}\bigl(n^{-1/2}
M(T_n,S_k) \ge\ell^{1/p}\bigr)
\\
&&\qquad= 0.
\end{eqnarray*}
This establishes the claimed uniform integrability.

Finally, note that convergence in distribution and the above uniform
integrability imply that in any space in which (a sequence of random
variables with the laws of) $(n^{-1/2}M(T_n,S_k),n \ge1)$ converges
\emph{in probability} to $\chi_k$ (a chi random variable with $2k$
degrees of freedom), we additionally have convergence in $L^p$. This
follows by standard arguments, for example, Theorem 13.7 of \cite{MR1155402}.

\section{Excursions, bridges, trees and forests}\label{secapp}
In this section, we describe the transformations of Section~\ref
{secnovel} in the language of excursions.
This perspective on the results serves two purposes. First, in the
excursion framework, a similarity
is immediately apparent,
between the results of the current paper and results of Aldous and
Pitman \cite{AlPi1994} on scaling limits of random mappings and
on decompositions of reflecting Brownian bridge. Though there seems to
be no direct link between the main
results of the two papers, the idea that they may possess a common
strengthening is intriguing. Second, as noted
in the body of Section~\ref{secnovel}, a careful reader may have had
questions about the precision of the
definitions of some of the random objects under consideration, and the
excursion-theoretic description clarifies
such matters.

Let $e=(e(t),0 \le t \le1)$ be a standard Brownian excursion, and
write $\mathcal{T}_e$ for the $\mathbb{R}$-tree coded by $e$. (We
recall that the
points of $\mathcal{T}_e$ are equivalence classes $\{[x],0 \le x \le
1\}$,
where points $x,y \in[0,1]$ are equivalent if $e(x)=e(y)=\inf\{
e(z)\dvtx
x \le z \le y\}$, and refer the reader to \cite{Legall2005} for more
details of this standard construction.)

Next, let $\mathcal A_e=\{(s,y)\in[0,1]\times\mathbb{R}^+\dvtx0\le
y\le
e(s)\}$
be the set of points lying above the $x$-axis and below the graph of
$e$. For each point $(x,y)$ in $A_e^o$, the interior of $A_e$, let
\[
\underline{s}(x,y)  =\underline{s}(x,y,e) = \inf \bigl\{x'\dvtx
x' \in(0,x), e(z) \ge y\ \forall z \in \bigl[x',x
\bigr] \bigr\}
\]
and let
\[
\bar{s}(x,y)  =\bar{s}(x,y,e) = \sup \bigl\{x'\dvtx
x' \in(x,1), e(z) \ge y\ \forall z \in \bigl[x,x'
\bigr] \bigr\}.
\]
In other words, the line segment
$[\underline s(x,y), \bar{s}(x,y)]\times\{y\}$
is the maximal horizontal line segment through $(x,y)$ contained in
$\mathcal A_e$.

We wish to obtain an excursion-theoretic representation of the Poisson
process on $\operatorname{skel}(\mathcal{T}_e) \times[0,\infty)$
with intensity measure
$\ell\otimes\mathrm{Leb}_{[0,\infty)}$, where $\ell$ is the length
measure on $\operatorname{skel}(\mathcal{T}_e)$ and $\mathrm
{Leb}_{[0,\infty)}$
is Lebesgue measure on
$[0,\infty)$. To do so, for $(x,y)\in\mathcal A_e^o$, we view the
points of $[\underline s(x,y), \bar{s}(x,y))\times\{y\}$ as
representing the point $[\underline s(x,y)]$ of $\operatorname
{skel}(\mathcal{T}_e)$.
We then consider a process $\mathcal{P}_e^{\circ}$ which,
conditional on $e$, is a Poisson process on $\mathcal{A}_e^o\times
[0,\infty)$ with intensity measure at $((x,y),t)$ given by
\[
\frac{\ddd\mathrm{Leb}_{\mathcal{A}_e^o}\otimes\ddd\mathrm
{Leb}_{[0,\infty)}}{\bar{s}(x,y,e)-\underline s(x,y,e)}.
\]
For $t \in[0,\infty)$, let
\[
X_t=X_t \bigl(e,\mathcal{P}_e^{\circ}
\bigr) = \bigl\{z \in[0,1]\dvtx\exists \bigl((x,y),s \bigr) \in\mathcal
{P}_e^{\circ},s \le t, z \in \bigl[\underline{s}(x,y),
\bar{s}(x,y) \bigr] \bigr\}.
\]
In words, the (equivalence classes of) points of $X_t$ are the points
of $\mathcal{T}_e$ lying
in subtrees that have been cut by $\mathcal{P}_e^{\circ}$ by time
$t$. We
define $X_{t-}$ accordingly, let $Y_t=[0,1]\setminus X_t$ and let
$Y_{t-}=[0,1]\setminus X_{t-}$.

Next, for $0 \le t \le\infty$,
let $m_t=\mathrm{Leb}_{[0,1]}(Y_t)$ be the Lebesgue measure of the
points that are not yet cut at time $t$, and
let $m_{t-}=\mathrm{Leb}_{[0,1]}(Y_{t-})$. Then let $\mathcal{P}_e =
\{p
=((x,y),t) \in\mathcal{P}_e^{\circ}\dvtx m_t < m_{t-}\}$
for the set of points that reduce the measure of the ``uncut subtree.''
We next explain how the points of $\mathcal{P}_e$ yield a family of
transformations of the excursion $e$.

For
$z \in\widebar{Y}_t$, the closure of $Y_t$,
let $v_t(z) = \mathrm{Leb}_{[0,1]}([0,z] \cap Y_t)$. The function $v_t
\dvtx\widebar{Y}_t \to[0,m_t]$ is nondecreasing. Furthermore, the
results of \cite{aldous1998standard} imply that $v_t(1)=m_t$ and that
for $0 \le z < z' \le1$ we have $v_t(z)=v_t(z')$ if and only if there
exists $(x,y) \in\mathcal A_e^o$ such that $z= \underline{s}(x,y)$ and
$z'= \bar{s}(x,y)$. In other words, $v_t(z)=v_t(z')$ precisely if
$[z]=[z']$ is the root of a subtree that is cut before or at time $t$.

Let $e_t^0\dvtx[1-m_t,1] \to[0,\infty)$ be given by setting
$e_t^0(z) =
e(v_t^{-1}(z-(1-m_t)))$, where $v_t^{-1}(u)=\inf\{x \dvtx v_t(x)\ge
u\}$
[we could in fact take $v_t^{-1}(u)$ to be any point in the pre-image
of $u$ under $v_t$; the comments of the preceding paragraph show that
the value of $e(v_t^{-1}(u))$ does not depend on this choice]. Then
Theorem~4 of \cite{aldous1998standard}, together with the comments in
Section~3.5 of that paper, implies that conditional on $m_t$, if the
function $e_t^0$ is translated to have domain $[0,m_t]$ then the result
is distributed as a standard Brownian excursion of length $m_t$. We
define $m_{t-}$, $v_{t-}$ and the excursion $e_{t-}$ similarly.

Next, for each point $p=((x,y),t)$ of $\mathcal{P}_e$, we define a random
function $e^p$ with
domain $[1-m_{t-},1-m_t]$ as follows.
For $z \in[1-m_{t-},1-m_t]$, set
\[
e^p(z) = e_{t-} \bigl(v_{t-}^{-1}
\bigl(\underline{s}(x,y) \bigr)+z \bigr).
\]
Notice that
\[
(1-m_t)-(1-m_{t-})=m_{t-}-m_t =
v_{t-} \bigl(\bar{s}(x,y) \bigr)-v_{t-} \bigl(
\underline{s}(x,y) \bigr).
\]
Translated to have range $[0,v_{t-}(\bar{s}(x,y))-v_{t-}(\underline
{s}(x,y))]$, the excursion $e^p$ then codes the tree cut by point $p$
under the standard coding of trees by excursions.

Finally, for $t \in[0,\infty)$ let $e_t\dvtx[0,1] \to[0,\infty)$
be the unique
function such that $e_t|_{[1-m_t,1]}\equiv e_t^0$ and such that
for each $p =(x,y,s) \in\mathcal{P}_e$ with $0 \le s \le t$,
\[
e_t|_{[1-m_{s-},1-m_s]} \equiv e^p.
\]
The function $e_t$ is the ``concatenation'' of the functions
\[
\bigl\{e^p, p=(x,y,s) \in\mathcal{P}_e\dvtx0 \le s \le
t \bigr\}
\]
and of the function $e_t^0$.
We define the function $e_{t-}$ similarly.
The function $e_t$ is comprised of a countably infinite number of
excursions away from zero; the trees coded by these excursions together
comprise the $\mathbb{R}$-forest $(\mathcal{F}_t,d_t,\mu_t)$ of
Section~\ref
{secnovel}. A similar coding of a random continuum forest, by a
reflecting Brownian bridge conditioned on its local time at zero, is
described in \cite{aldous1998standard}, Section~3.5.

The random variables $(e_t,t \ge0)$ are consistent
in the sense that for any fixed $s \in[0,1)$, there is
an almost surely finite time $t_0$ such that for all
$t' > t \ge t_0$, $e_{t'}|_{[0,s]} = e_t|_{[0,s]}$. It follows
that the limit $e_{\infty} = \lim_{t \to\infty} e_t$ is almost surely
well-defined.

In the current terminology, for $0 \le t \le\infty$, we have
\[
L(t) = \int_0^t m_s \,\ddd
s.
\]
We view $e_t|_{[0,1-m_t]}=e_{\infty}|_{[0,1-m_t]}$ as
coding a random measured $\mathbb{R}$-tree with mass $1-m_t$, as follows.
Let $d_t^*\dvtx[0,1-m_t] \to[0,\infty)$ be given by setting
\[
d_t^*(u,v) = e_t(v)+e_t(u) - 2\inf
_{u \le s \le v} e_t(s) + {L(s_v)-L(s_u)},
\]
{for all $0 \le u \le v \le1-m_t$ such that there exist $s_u,s_v\in
[0,t]$ for which
$u\in[1-m_{s_u-},1-m_{s_u})$ and
$v\in[1-m_{s_v-},1-m_{s_v})$.}

Then the tree $(\widehat{\mathcal{T}}_t,\hat d_t,\hat\mu_t)$ of
Section~\ref{secnovel} may be defined as follows.
Set $\widehat{\mathcal{T}}_t = \{[u], 0 \le u \le1-m_t\}$, where $[u]$
denotes the
equivalence class
of $u\dvtx[u]=\{0 \le v \le1-m_t\dvtx d_t^*(u,v)=0\}$.
Let $\hat d_t$ be the push-forward of $d_t^*$ to $\widehat{\mathcal{T}}_t$,
and let
$\hat\mu_t$ be the push-forward of Lebesgue measure on $[0,1-m_t]$ to
$\widehat{\mathcal{T}}_t$.

The content of the first assertion of Theorem~\ref{teoexcbr} is that
$e_{\infty}$ is distributed as a reflecting Brownian bridge; we may see
the equivalence between the first part of Theorem~\ref{teoexcbr} and
the latter statement as follows. First, a standard and trivial
extension of Theorem~\ref{teocayleygh}, states that a uniformly
random doubly-marked tree on $[n]$ converges to $(\mathcal{T},d,\mu,(\rho,\rho
'))$ with respect to $d_{\mathrm{GHP}}^2$, where $(\mathcal
{T},d,\mu)$ is a
Brownian CRT and $\rho,\rho'$ are independent elements of $\mathcal
{T}$, each
with law $\mu$. Next, recall the standard one-to-one map between
doubly-marked trees on $[n]$ and ordered rooted forests on $[n]$ which
``removes the edges on the path between the two marked vertices.''
Finally, results from \cite{AlPi1994}---in particular, the first two
distributional convergence results in Theorem 8 of that paper, together
with the remark in Section~10---imply that that the contour process of
a uniformly random ordered rooted forest on $[n]$ converges after
appropriate rescaling to a reflecting Brownian bridge. (We remark that
a direct encoding of a doubly-rooted Brownian CRT by reflecting
Brownian bridge, also mentioned in the \hyperref[secintro]{Introduction}, is given in \cite
{bertoinpitman94path}. The latter is closely related to, but distinct
from, the encoding obtained by considering ordered rooted forests as above.)

Next, for each point $p=((x,y),t) \in\mathcal{P}_e$, let
\[
u_p =1-m_t + v_t \bigl(\underline s(x,y,e)
\bigr) \in[1-m_t,1].
\]
If we view $e_t^0$ as coding a tree, then the (equivalence class of
the) point $u_p$ is a leaf of this tree.
Then let $y_p=y_p(e,\mathcal{P}_e)$ be the push-forward of $u_p$ under
the map
that sends $e_t \to e_{\infty}$. In other words,
let $p'=((x',y'),t')$ be the a.s. unique point of $\mathcal{P}_e$ with
$t' >
t$, with $\underline s(x',y',e) < \underline s(x,y,e)$, with
$\bar{s}(x',y',e) > \bar{s}(x,y,e)$, and minimizing $t'$
subject to these constraints.
Then we set
\[
y_p(e,\mathcal{P}_e) = 1-m_{t'-} +
v_{t'-} \bigl(\underline s(x,y,e) \bigr)-v_{t'-} \bigl(
\underline s \bigl(x',y',e \bigr) \bigr).
\]
The second assertion of Theorem~\ref{teoexcbr} is that conditional on
$e_{\infty}$, the law of $\{y_p,p \in\mathcal{P}_e\}$
is the same as that of the following family of random variables. Let $Z
= \{z \in(0,1)\dvtx e_{\infty}(z)=0\}$. Then
independently for each $z \in Z$ let $Y_z$ be uniform on $[z,1]$.

We remark that a related family of random variables plays a role in
Theorem~8 of \cite{AlPi1994} (in particular in the third distributional
convergence of that theorem). The latter theorem, which describes a
distributional limit
for uniformly random mappings of $[n]$, has several suggestive
similarities to our main result.
We do not see any direct relation between the distributional limits
described in that paper and those established here.
Establishing such a relation would certainly be of interest, and would
likely yield insights in both the discrete and limiting settings.
\end{appendix}



%

\printaddresses

\end{document}